\newtheorem{theorem}{Theorem}[section]
\newtheorem{lemma}[theorem]{Lemma}
\newtheorem{proposition}[theorem]{Proposition}
\newtheorem{corollary}[theorem]{Corollary}
\theoremstyle{definition}
\newtheorem{definition}[theorem]{Definition}
\theoremstyle{remark}
\newtheorem{remark}[theorem]{Remark}
\numberwithin{equation}{section}
\begin{document}
\setcounter{page}{1}

\title[$L^p$-$L^q$ estimates for  pseudo-differential operators  ]{$L^p$-$L^q$ estimates for subelliptic  pseudo-differential operators on compact Lie groups}

 \author[D. Cardona]{Duv\'an Cardona}
\address{
 Duv\'an Cardona:
  \endgraf
  Department of Mathematics: Analysis, Logic and Discrete Mathematics
  \endgraf
  Ghent University, Belgium
  \endgraf
  {\it E-mail address} {\rm duvan.cardonasanchez@ugent.be}
  }
 \author[J. Delgado]{Julio Delgado}
\address{
  Julio Delgado:
  \endgraf
  Departmento de Matematicas
  \endgraf
  Universidad del Valle
  \endgraf
  Cali-Colombia
  \endgraf
    {\it E-mail address} {\rm delgado.julio@correounivalle.edu.co}}
  \author[V. Kumar]{Vishvesh Kumar}
\address{
 Vishvesh Kumar:
  \endgraf
  Department of Mathematics: Analysis, Logic and Discrete Mathematics
  \endgraf
  Ghent University, Belgium
  \endgraf
  {\it E-mail address} {\rm vishveshmishra@gmail.com, Vishvesh.Kumar@UGent.be}
  }

\author[M. Ruzhansky]{Michael Ruzhansky}
\address{
  Michael Ruzhansky:
  \endgraf
  Department of Mathematics: Analysis, Logic and Discrete Mathematics
  \endgraf
  Ghent University, Belgium
  \endgraf
 and
  \endgraf
  School of Mathematical Sciences
  \endgraf
  Queen Mary University of London
  \endgraf
  United Kingdom
  \endgraf
  {\it E-mail address} {\rm michael.ruzhansky@ugent.be}
  }

 \allowdisplaybreaks

\subjclass[2010]{Primary {22E30; Secondary 58J40}.}

\keywords{Pseudo-differential operator, Graded Lie groups, Mapping properties, Bessel potential, Rockland operators}

\thanks{The authors are supported  by the FWO  Odysseus  1  grant  G.0H94.18N:  Analysis  and  Partial Differential Equations and by the Methusalem programme of the Ghent University Special Research Fund (BOF)
(Grant number 01M01021). Vishvesh Kumar and Michael Ruzhansky are supported by  FWO Senior Research Grant G011522N. Duv\'an Cardona has been supported by the FWO Fellowship
grant No 1204824N.  Michael Ruzhansky is also supported  by EPSRC grants 
EP/R003025/2 and EP/V005529/1.}

\begin{abstract} We establish the $L^p$-$L^q$-boundedness of subelliptic pseudo-differential operators on a compact Lie group $G$. Effectively, we deal with the  $L^p$-$L^q$-bounds for operators  in the sub-Riemmanian setting because the subelliptic classes are associated to a H\"ormander sub-Laplacian.  The Riemannian case associated with the Laplacian is also included as a special case.   Then, applications to the $L^p$-$L^q$-boundedness of pseudo-differential operators in the H\"ormander classes on $G$ are given in the complete range $0\leq \delta\leq \rho\leq 1,$ $\delta<1.$  This also gives the $L^p$-$L^q$-bounds in the  Riemannian setting, because the later classes are associated with the Laplacian on $G$. In both cases, in the Riemannian and the sub-Riemannian settings, necessary and sufficient conditions for the $L^p$-$L^q$-boundedness of operators are also anaysed.

\end{abstract} \maketitle

\tableofcontents
\section{Introduction} 
This paper is mainly concerned with the $L^p$-$L^q$ boundedness of pseudo-differential operators associated with the global H\"ormander symbol classes on compact Lie groups for the range $1<p, q <\infty.$
Our analysis also includes estimates for pseudo-differential operators associated with  subelliptic symbol classes.

The relevance of the boundedness of Fourier multipliers and pseudo-differential operators has been highlighted by Stein and H\"ormander. These kinds of estimates naturally arise in the study of  some evolution equations. For instance, one can see \cite{Hor67,Stein71}. Till now, there have been extensive activities dealing with the $L^p$-$L^q$ boundedness for spectral multipliers and Fourier multipliers on compact Lie groups, we cite \cite{ARN,AR,CR20,Nikolskii,CK1,KR,KRIMRN,RR23} for a non-exhaustive list of references. 

To the best of our knowledge, there has  been no activity to pursue the $L^p$-$L^q$ estimates of non-invariant operators, in particular, pseudo-differential operators on compact Lie groups. In the classical Euclidean setting, H\"ormander established the $L^p$-$L^q$ estimates of pseudo-differential operators associated with the so-called ``H\"ormander symbol classes" $S^m_{\rho, \delta}(\mathbb{R}^n \times \mathbb{R}^n)$ on $\mathbb{R}^n$ with $m \in \mathbb{R}$ and $0 \leq \delta <\rho \leq 1.$ It is well-known that any $L^p$-$L^q$ bounded Fourier multiplier is nontrivial only if $p \leq q$ (see \cite{Hormander1960}). Therefore, it is natural to assume the condition $p \leq q$ when dealing with pseudo-differential operators. Later, \'Alvarez and Hounie \cite{Hounie} extended H\"ormander's result to the range $0 \leq \delta <1$ and $0<\rho \leq 1$ without the restriction $\delta < \rho.$ In a recent work by the first and last two authors \cite{LpLqGraded}, we have provided sufficient and necessary conditions for the $L^p$-$L^q$ boundedness of pseudo-differential operators associated with global H\"ormander symbol classes $S^m_{\rho, \delta}(G \times \widehat{G}), \,m \in \mathbb{R},  0\leq \delta\leq \rho\leq 1$ and $\delta\neq 1,$ on a graded Lie group $G,$ where $\widehat{G}$ denotes the  unitary dual of $G.$

In this work, we focus on pseudo-differential operators associated with the global H\"ormander symbol classes encoded with the Riemannian and sub-Riemannian structure of compact Lie groups. One of the main differences between the approach developed in \cite{LpLqGraded}, based on the analysis of hypoelliptic operators on those groups, is the use of the structure of the dilations of the group, while the approach of this paper will be based on the sub-markovian properties of the semigroup $e^{-t\mathcal{L}},$ $t>0,$ of a H\"ormander sub-Laplacian $\mathcal{L}=\sum_{j=1}^kX_j^2,$ where one exploits the geometric properties induced by the H\"ormander system of vector fields $\{X_j:1\leq j\leq n\}$ on a compact Lie group $G.$  

On a compact Lie group $G$, in the monograph \cite{Ruz}, Turunen and the last author introduced a global notion of the H\"ormander symbol classes on $G$. According to this terminology, and observing that any continuous linear operator $A$ acting on $C^\infty(G)$ has a right convolution kernel $R_A=R_{A}(x,y)\in \mathscr{D}'(G\times G),$ namely, a distribution that describes the action of the operator by the group convolution $*$ as follows
\begin{equation}
    Af(x)=(f\ast R_{A}(x,\cdot))(x),
\end{equation} the {\it global symbol} of $A,$ is the matrix-valued function defined  on $G\times \widehat{G},$
 defined via
\begin{equation}
    \sigma_A(x,\xi)=\widehat{R}_A(x,\xi),\,(x,[\xi])\in G\times \widehat{G}.
\end{equation}Here, $\widehat{\cdot}$ denotes the matrix-valued group Fourier transform on $G$. By classifying these matrix-valued symbols by the behaviour of their derivatives (and of their differences), the last author  and Turunen \cite{Ruz} introduced the symbols classes $S^{m}_{\rho,\delta}(G\times \widehat{G}),$ allowing the complete range $0\leq \delta\leq \rho\leq 1,$ and providing a new description of the H\"ormander classes $S^{m}_{\rho,\delta}(T^{*}G)$ (as defined in \cite{HormanderBook34} with the local notion of the principal symbol, defined on the cotangent-bundle $T^{*}M$ of a compact manifold) when additionally, $0\leq \delta<\rho\leq 1,$ and $\rho>1-\delta.$

On the other hand, it was observed by the first and the last author in \cite{CR20}, that the symbols classes $S^{m}_{\rho,\delta}(G\times \widehat{G}),$ are associated to the Riemannian structure of the group $G$, in the sense that the growth of the derivatives of  symbols is classified according in terms of the spectrum of the Laplacian $\mathcal{L}_G=-\sum_{j=1}^{n}X_j^2,$ $n=\dim(G).$ Then, in generalising this idea, in   \cite{CR20} the subelliptic H\"ormander classes $S^{m,\mathcal{L}}_{\rho,\delta}(G\times \widehat{G}),$ were introduced with the derivatives (and differences) of symbols compared with respect to the growth of the eigenvalues of a fixed H\"ormander sub-Laplacian $\mathcal{L}=-\sum_{j=1}^kX_j^2,$ where $k<n.$ We observe that the pseudo-differential calculus  associated to the ``subelliptic'' classes $S^{m,\mathcal{L}}_{\rho,\delta}(G\times \widehat{G}),$ is more singular than the one associated to the ``elliptic'' classes $S^{m}_{\rho,\delta}(G\times \widehat{G}).$ Indeed, singularities of the kernels of the ``subelliptic'' classes are classified in terms of the Hausdorff dimension $Q,$ associated with the control distance associated with the sub-Laplacian  $\mathcal{L}.$ In the next subsection we present the $L^p-L^q$ regularity properties of the subelliptic H\"ormander classes $S^{m,\mathcal{L}}_{\rho,\delta}(G\times \widehat{G}).$   

We finally observe that for the case $p=q,$ namely, the problem regarding  the $L^p$-boundedness of pseudo-differential operators, Fefferman in \cite{F73} has established a sharp criterion of continuity for the operator in the H\"ormander classes $S^{m}_{\rho,\delta}(\mathbb{R}^n\times \mathbb{R}^n)$ on the Euclidean space. Then Fefferman's criterion has been extended for several pseudo-differential calculi including the Weyl-H\"ormander calculus \cite{Delgado2006}, the H\"ormander classes $S^{m}_{\rho,\delta}(G\times \widehat{G})$ associated to the Laplacian \cite{DR19}, also extended in the sub-Riemannian setting, namely, for the H\"ormander classes   $S^{m,\mathcal{L}}_{\rho,\delta}(G\times \widehat{G})$  associated to a H\"ormander sub-Laplacian \cite{CR20}, and finally for the H\"ormander classes on graded Lie groups in \cite{CDR21JGA}. In order to give a general perspective about this problem, here we are mainly concerned with the case $p<q.$ 

Notably, when dealing with the $L^p$-$L^q$-boundedness of operators with symbols in the classes $S^{m,\mathcal{L}}_{\rho,\delta}(G\times \widehat{G}),$ one has to analyse separately the cases: (i) $1<p\leq q\leq 2,$ (ii) $1<p\leq 2\leq q<\infty,$ and (iii) $2\leq p\leq q<\infty.$ Here for the case $1<p\leq 2\leq q<\infty$ we provide necessary and sufficient conditions.
\subsection{Main results}

The following theorem presents the result that establishes a sufficient condition and in some cases also a necessary condition, for the $L^p$-$L^q$ boundedness of subelliptic pseudo-differential operators on compact Lie groups.
\begin{theorem}\label{main:theorem} 
Let $1<p, q<\infty,$ and   $0\leq \delta< \rho\leq 1.$ Let $G$ be a compact Lie group, and let $Q$ be its Hausdorff dimension with respect to the control distance associated with a H\"ormander sub-Laplacian $\mathcal{L}.$  Then, the following statements hold.
\begin{itemize}
    \item Let $1<p\leq2\leq  q<\infty.$ Every pseudo-differential operator $A\in \Psi^{m,\mathcal{L}}_{\rho,\delta}(G\times \widehat{G})$ admits a bounded extension from $L^p(G)$ into $L^q(G),$ that is
\begin{equation}\label{Lp-Lq:bound}
   \forall f\in C^{\infty}(G),\, \Vert A f\Vert_{L^q(G)}\leq  C\Vert  f\Vert_{L^p(G)}\,\,\,
\end{equation} holds, if and only if, 
\begin{equation}\label{Necessary:condition:3:intro}
   m\leq -Q\left(\frac{1}{p}- \frac{1}{q}\right).
\end{equation}  
\item Every pseudo-differential operator $A\in \Psi^{m,\mathcal{L}}_{\rho,\delta}(G\times \widehat{G})$ admits a bounded extension from $L^p(G)$ into $L^q(G),$ that is \eqref{Lp-Lq:bound} holds, in the following cases: 
\begin{itemize}
        \item[(i)] if $1<p\leq q \leq 2$ and  \begin{equation}\label{Necessary:condition:4:intro}
   m\leq -Q \left( \frac{1}{p}-\frac{1}{q}+(1-\rho) \left(\frac{1}{q}-\frac{1}{2}\right)\right). \end{equation}    
\item[(ii)] if $2 \leq p \leq q<\infty$ and 

\begin{equation}\label{Necessary:condition:3:intro:}
   m\leq -Q\left( \frac{1}{p}-\frac{1}{q}+(1-\rho) \left(\frac{1}{2}-\frac{1}{p}\right)\right).
   \end{equation}
\end{itemize}
\end{itemize}
\end{theorem}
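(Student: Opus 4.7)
\emph{Plan of proof.} My approach reduces the $L^p$-$L^q$ continuity of $A\in\Psi^{m,\mathcal{L}}_{\rho,\delta}(G\times\widehat{G})$ to three ingredients: (a) a \emph{subelliptic Hardy--Littlewood--Sobolev inequality} for the Bessel potential, namely
\[
\|(1+\mathcal{L})^{-s/2}f\|_{L^{q}(G)}\leq C\|f\|_{L^{p}(G)},\qquad 1<p\leq q<\infty,\; s\geq Q\bigl(1/p-1/q\bigr);
\]
(b) the Fefferman-type $L^{p}$-continuity criterion for subelliptic classes established in \cite{CR20}, $\Psi^{-Q(1-\rho)|1/p-1/2|,\mathcal{L}}_{\rho,\delta}(G\times\widehat{G})\hookrightarrow\mathcal{L}(L^{p}(G))$; and (c) the symbolic calculus identifying $(1+\mathcal{L})^{s/2}A(1+\mathcal{L})^{t/2}$ as an element of $\Psi^{m+s+t,\mathcal{L}}_{\rho,\delta}$. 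Ingredient (a) I plan to derive from the sub-Gaussian heat kernel estimates for the sub-markovian semigroup $e^{-t\mathcal{L}}$ through the subordination formula $(1+\mathcal{L})^{-s/2}=\frac{1}{\Gamma(s/2)}\int_{0}^{\infty}t^{s/2-1}e^{-t}e^{-t\mathcal{L}}\,dt$, exploiting the volume growth $|B(x,r)|\sim r^{Q}$ associated with the Carnot--Carath\'eodory distance induced by the H\"ormander system.

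For the main case $1<p\leq 2\leq q<\infty$ I would factor
\[
A=(1+\mathcal{L})^{-s_{2}/2}\circ B\circ(1+\mathcal{L})^{-s_{1}/2},\qquad B:=(1+\mathcal{L})^{s_{2}/2}A(1+\mathcal{L})^{s_{1}/2},
\]
with $s_{1}=Q(1/p-1/2)$ and $s_{2}=Q(1/2-1/q)$. By (c), $B\in\Psi^{m+s_{1}+s_{2},\mathcal{L}}_{\rho,\delta}$, and the hypothesis $m\leq -Q(1/p-1/q)$ forces $m+s_{1}+s_{2}\leq 0$, so $B$ is $L^{2}$-bounded. The two Bessel potentials then deliver $L^{p}\to L^{2}$ and $L^{2}\to L^{q}$ via (a), which composes to $L^{p}\to L^{q}$. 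For the \emph{necessity} of \eqref{Necessary:condition:3:intro} in this range, I would specialise to the Fourier multiplier $A=(1+\mathcal{L})^{m/2}\in\Psi^{m,\mathcal{L}}_{\rho,\delta}$ and invoke the sharpness of the Hardy--Littlewood--Sobolev exponent to obtain the lower bound on $-m$.

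For the remaining sufficient-condition cases I would use a \emph{single} Bessel potential. When $1<p\leq q\leq 2$, setting $s=Q(1/p-1/q)$ and writing $A=B\circ(1+\mathcal{L})^{-s/2}$ with $B\in\Psi^{m+s,\mathcal{L}}_{\rho,\delta}$, the hypothesis \eqref{Necessary:condition:4:intro} becomes $m+s\leq -Q(1-\rho)(1/q-1/2)$, so $B$ is $L^{q}$-bounded by (b) and the estimate follows by composing with (a). The case $2\leq p\leq q<\infty$ is then obtained by duality, using that $\Psi^{m,\mathcal{L}}_{\rho,\delta}$ is stable under formal adjoints and that the exponent in \eqref{Necessary:condition:3:intro:} is the image of \eqref{Necessary:condition:4:intro} under $(p,q)\mapsto(q',p')$. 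The principal obstacle is (a): proving the sharp subelliptic Hardy--Littlewood--Sobolev inequality with the \emph{Hausdorff} dimension $Q$ (rather than the topological dimension $n$), which forces one to use precise small-time heat kernel asymptotics reflecting the subelliptic ball volumes instead of the Riemannian ones. A secondary, but essentially bookkeeping, obstacle is to verify that the composition $(1+\mathcal{L})^{s/2}A(1+\mathcal{L})^{t/2}$ indeed lies in $\Psi^{m+s+t,\mathcal{L}}_{\rho,\delta}$ within the subelliptic calculus of \cite{CR20}; this reduces to $(1+\mathcal{L})^{s/2}\in\Psi^{s,\mathcal{L}}_{1,0}$ and the composition formula for these classes.
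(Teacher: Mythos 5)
Your proposal is correct and follows essentially the same route as the paper: factorisation through subelliptic Bessel potentials, the sharp subelliptic Hardy--Littlewood--Sobolev inequality (the paper's Lemma \ref{Lemma:Bessel:potential:lplq}, whose necessity half is obtained from the converse ultracontractivity theorem in \cite{Saloff-CosteBook} rather than directly from small-time heat kernel asymptotics), the Calder\'on--Vaillancourt theorem for $\Psi^{0,\mathcal{L}}_{\rho,\delta}$, and the subelliptic $L^p$ theorem of \cite{CR20}. The only cosmetic deviations are that the paper treats case (ii) by the mirrored factorisation $A=B_{-m'}(B_{m'}A)$ instead of by duality, and simply cites the Hardy--Littlewood--Sobolev inequality rather than re-deriving it by subordination.
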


\begin{remark} \label{rem1}  The order conditions in \eqref{Necessary:condition:3:intro},  \eqref{Necessary:condition:4:intro} and \eqref{Necessary:condition:3:intro:} can be written in a simplified way for $1<p,q<\infty$ as follows:
\begin{equation}\label{order:condition}
    m\leq -Q\left(   \frac{1}{p}-\frac{1}{q}+(1-\rho)\max\left\{ \frac{1}{2}-\frac{1}{p},\frac{1}{q}-\frac{1}{2},0\right\}\right),
\end{equation}where $Q$ is the Hausdorff dimension of $G$ with respect to the control distance associated to the sub-Laplacian $\mathcal{L}.$    
\end{remark}
\begin{remark}
    If $G=\mathbb{R}^n,$ although this is not a compact Lie group,  the order condition in \eqref{order:condition} is sharp for Fourier multipliers, see H\"ormander \cite[Page 163]{Hor67}.
\end{remark}
\begin{remark}
    When a system of vector fields $X=\{X_j\}$ provides an orthonormal basis of the Lie algebra (endowed, up to a constant factor, with is unique bi-invariant Riemannian metric), the H\"ormander condition is trivially satisfied, the sub-Laplacian associated to the system $X$ coincides with the Laplacian and the classes $S^{m,\mathcal{L}}_{\rho,\delta}(G\times \widehat{G})$ agree with the ``elliptic  classes'' $S^{m}_{\rho,\delta}(G\times \widehat{G})$ of the last author with Turunen \cite{Ruz}. The following corollary provides the $L^p-L^q$-regularity properties for the elliptic classes.
\end{remark}
\begin{corollary}\label{main:cor} 
Let $1<p, q<\infty,$ and   $0\leq \delta< \rho\leq 1.$ Let $G$ be a compact Lie group of dimension $n.$  Then, the following statements hold.
\begin{itemize}
    \item Let $1<p\leq2\leq  q<\infty.$ Every pseudo-differential operator $A\in \Psi^{m}_{\rho,\delta}(G\times \widehat{G})$ admits a bounded extension from $L^p(G)$ into $L^q(G),$ that is
\begin{equation}\label{Lp-Lq:bound:cor}
   \forall f\in C^{\infty}(G),\, \Vert A f\Vert_{L^q(G)}\leq  C\Vert  f\Vert_{L^p(G)}\,\,\,
\end{equation} holds, if and only if, 
\begin{equation}\label{Necessary:condition:3:intro:cor}
   m\leq -n\left(\frac{1}{p}- \frac{1}{q}\right).
\end{equation}  
\item Every pseudo-differential operator $A\in \Psi^{m}_{\rho,\delta}(G\times \widehat{G})$ admits a bounded extension from $L^p(G)$ into $L^q(G),$ that is \eqref{Lp-Lq:bound:cor} holds, in the following cases: 
\begin{itemize}
        \item[(i)] if $1<p\leq q \leq 2$ and  \begin{equation}\label{Necessary:condition:4:intro:cor}
   m\leq -n \left( \frac{1}{p}-\frac{1}{q}+(1-\rho) \left(\frac{1}{q}-\frac{1}{2}\right)\right). \end{equation}    
\item[(ii)] if $2 \leq p \leq q<\infty$ and 

\begin{equation}\label{Necessary:condition:3:intro:cor:intro}
   m\leq -n\left( \frac{1}{p}-\frac{1}{q}+(1-\rho) \left(\frac{1}{2}-\frac{1}{p}\right)\right).
   \end{equation}
\end{itemize}
\end{itemize}
\end{corollary}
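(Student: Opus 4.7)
The plan is to deduce Corollary \ref{main:cor} directly from Theorem \ref{main:theorem} by specializing the H\"ormander system of vector fields to an orthonormal basis of the Lie algebra, as indicated in the remark immediately preceding the corollary. First, I would fix a bi-invariant Riemannian metric on $G$ (unique up to a positive constant) and choose $X=\{X_1,\dots,X_n\}$ to be an orthonormal basis of the Lie algebra $\mathfrak{g}$ with respect to this metric. Since the $X_j$'s already span $\mathfrak{g}$ at the identity, H\"ormander's bracket-generating condition is trivially satisfied with step $1$, so the construction of the subelliptic calculus of \cite{CR20} applies to this system.

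Next I would identify the resulting geometric and analytic objects with their Riemannian counterparts. The sub-Laplacian associated with $X$ is $\mathcal{L}=-\sum_{j=1}^{n}X_j^2$, which coincides with the Laplace--Beltrami operator $\mathcal{L}_G$ on $G$. Because the horizontal distribution is the whole tangent bundle, the Carnot--Carath\'eodory control distance associated with $X$ coincides with the Riemannian distance, and in particular the Hausdorff dimension with respect to the control distance equals the topological dimension, $Q=n$. Moreover, by the construction of the subelliptic symbol classes in \cite{CR20}, the growth conditions on symbols (and their difference operators) measured against the eigenvalues of $\mathcal{L}$ become exactly the growth conditions measured against the eigenvalues of $\mathcal{L}_G$, so that $S^{m,\mathcal{L}}_{\rho,\delta}(G\times\widehat{G})=S^{m}_{\rho,\delta}(G\times\widehat{G})$ as sets of symbols, and correspondingly $\Psi^{m,\mathcal{L}}_{\rho,\delta}(G\times\widehat{G})=\Psi^{m}_{\rho,\delta}(G\times\widehat{G})$.

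With these identifications, applying Theorem \ref{main:theorem} with this particular choice of sub-Laplacian, and substituting $Q=n$ in the threshold conditions \eqref{Necessary:condition:3:intro}, \eqref{Necessary:condition:4:intro}, and \eqref{Necessary:condition:3:intro:}, yields the three sufficiency statements of Corollary \ref{main:cor} as well as the necessity in the range $1<p\leq 2\leq q<\infty$. The only mildly nontrivial point is to verify that the definitions of the two families of symbol classes agree under the specialization $\mathcal{L}=\mathcal{L}_G$ (in particular at the level of difference operators on $\widehat{G}$), but this is essentially by construction of the subelliptic classes and was already noted in the remark preceding the statement; no additional analytic argument is required beyond the content of Theorem \ref{main:theorem}.
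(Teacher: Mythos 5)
Your proposal is correct and follows exactly the route the paper intends: the corollary is obtained from Theorem \ref{main:theorem} by taking $X$ to be an orthonormal basis of $\mathfrak{g}$, so that the H\"ormander condition holds with step $\kappa=1$, the sub-Laplacian is the Laplacian, the control distance is the Riemannian one with $Q=n$ by \eqref{Hausdorff-dimension}, and $S^{m,\mathcal{L}}_{\rho,\delta}(G\times\widehat{G})=S^{m}_{\rho,\delta}(G\times\widehat{G})$, as stated in the remark preceding the corollary. No further comment is needed.
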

This paper is organised as follows. In Section \ref{sec2} we present the preliminaries of this paper related to sub-Markovian semigroups, and the subelliptic pseudo-differential calculus introduced in  \cite{CR20}. Subsequently, the $L^p-L^q$-boundedness of these subelliptic classes is analysed in Section  \ref{Section:lp:lq}. Then, in Section \ref{ex} we provide explicit   examples of our criterion about the $L^p$-$L^q$-boundedness of pseudo-differential operators in the case of the sphere $\mathbb{S}^3\cong \textnormal{SU}(2)$ and on $\textnormal{SU}(3).$

\section{Preliminaries} \label{sec2}

\subsection{Symmetric submarkovian semigroups} We briefly recall some classical facts concerning symmetric submarkovian 
semigroups on $L^2:=L^2(X,\mu).$ Here $(X,\mu)$ is a $\sigma$-finite measure space. For the definitions and results mentioned in this sub-section we follow \cite[Section II.5]{Saloff-CosteBook}, \cite[Example II.5.1]{Saloff-CosteBook} (see Remark \ref{Remark} below) and \cite[Theorem II.3.1, Page 14]{Saloff-CosteBook} (see Theorem \ref{Converse:HLT} below). 

Let $A$ be an operator with domain $\textnormal{Dom}(A)\subset L^2.$ We recall that

{\it $-A$ is the generator of a symmetric semigroup $T_{t}:=e^{-tA}$ on $L^2$ such that 
$$\Vert e^{-tA}\Vert_{L^2\rightarrow L^2}\leq e^{\alpha t}  $$
if and only if $A$ is self-adjoint, $\textnormal{Dom}(A)$ is a dense subspace of $L^2$, and $$(Af,f)\geq -\alpha\Vert f\Vert_{L^2}.$$  }

Let $Q$ be a symmetric bilinear form defined on a subspace $D\subset L^2.$ One says that $Q$ is positive if $Q(f,f)\geq 0,$ and closed if for every sequence $(f_n)_{n\in \mathbb{N}}\subset D,$ such that $f_{n}\rightarrow f$ in $L^2,$ and 
$$\lim_{n,m\rightarrow \infty}Q(f_n-f_m,f_{n}-f_{m})=0,$$
one has that $f\in D$ and that $Q(f_n-f,f_n-f)\rightarrow 0.$ One says that $Q$ is closable if it admits a closed extension.

If $A$ is a symmetric operator with a dense domain $\textnormal{Dom}(A)\subset L^2,$ then one may associate with it the symmetric bilinear form $Q_A(f,g):=Q(Af,g).$ If in addition $Q_A$ is positive, it is closable and its minimal closure $\overline{Q}_A$ is associated to a self-adjoint operator $\overline{A}$ which is an extension of $A.$ More precisely, $\overline{A}$ is the smallest self-adjoint extension of A, called the {\it Friedrich extension  } of $A.$ We shall not distinguish between $A$ and $\overline{A}.$

Recall that a semigroup $T_{t}$ on $L^2$ is called {\it submarkovian } if 
$f\in L^2,$  $0\leq f\leq 1,$ implies that $0\leq T_tf\leq 1.$ 
Such a semigroup acts on the $L^p$-spaces and $\Vert T_t\Vert_{L^p\rightarrow L^p}\leq 1.$

Symmetric sub-markovian semi-groups on $L^2$ may be characterised through properties of the associated symmetric bilinear form. A positive symmetric bilinear form $Q$ defined on $D\subset L^2,$ is said to be a {\it Dirichlet form}, if for all $g\in D,$ and for all $f\in D,$ such that $|f|\leq |g|,$ and  $|f(x)-f(y)|\leq |g(x)-g(y)|,$  one has that $f\in D,$ and $Q(f,f)\leq Q(g,g).$ 

If $T_t=e^{-tA}$ is a symmetric sub-markovian semigroup on $L^2,$ the associated bilinear form $Q(f,g):=(A^{1/2}f,A^{1/2}g),$ $f,g\in \textnormal{Dom}(A^{1/2}),$ is a closed 
Dirichlet form with dense domain in $L^2.$ Conversely, given a closed Dirichlet bilinear form $Q,$ with dense domain $D$ in $L^2,$ there exists a unique symmetric sub-markovian semigroup on $L^2,$ such that $T_t=e^{-tA}$ and $Q(f,g):=(A^{1/2}f,A^{1/2}g),$ $f,g\in \textnormal{Dom}(A^{1/2}).$
\begin{remark}\label{Remark}
    We note that sub-markovian semigroups arise naturally in the setting of compact manifolds. Indeed, if $M$ is a compact manifold with a volume form $dx,$ and $L^2(M)=L^2(M,dx),$ consider a family of vector fields
    $$X=\{X_1,X_2,\cdots, X_k\},\,k\leq n:=\dim(M).$$ If every $X_j$ is skew-adjoint on $L^2(M),$ namely, if 
    $$\forall f,g\in C^{\infty}(M),\,\, \smallint_{M}X_i(f)gdx=-\smallint_{M}fX_i(g)dx, $$ then we can associate with $\Delta_X:=-\sum_{i=1}^kX_i^2,$ its Friedrichs extension, which we still denote by $\Delta_X.$ Then, the semigroup
    $$T_{t}:=e^{-t\Delta_X}:L^2(M)\rightarrow L^{2}(M)$$ is a contraction semigroup. With respect to the symmetric bilinear form
$$Q(f,g)=(\Delta_X f,g),\,f,g\in C^{\infty}(M),$$ the semigroup $T_{t}:=e^{-t\Delta_X}$ is a sub-markovian semigroup. 
\end{remark} The following theorem will be fundamental for our further analysis.
\begin{theorem}\label{Converse:HLT} Let $T_{t}=e^{-tA}$ be a sub-markovian semigroup. Assume that $T_{t}$ is equicontinuous on $L^1(X,\mu)$ and on $L^{\infty}(X,\mu).$ Suppose that there exists $\alpha>0$ and $1<p<q\leq \infty$ such that 
\begin{equation}
    \Vert f\Vert_{L^q(X,\mu)}\leq C\Vert A^{\alpha/2} f\Vert_{L^p(X,\mu)}.
\end{equation} Then, with $Q$ defined by the identity $\alpha=Q(1/p-1/q),$ the following semigroup estimate holds
\begin{equation}
    \exists C>0,\forall f\in L^1(X,\mu),\,\,\Vert T_t f\Vert_{L^\infty(X,\mu)}\leq C t^{-\frac{Q}{2}}\Vert f\Vert_{L^\infty(X,\mu)}.
\end{equation}    
\end{theorem}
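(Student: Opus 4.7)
The plan is to convert the hypothesized Sobolev-type inequality into the ultracontractive bound in two main movements: first turning it into $L^p$–$L^q$ decay for the semigroup itself, then amplifying it by duality and the semigroup law to reach the endpoint $L^1$–$L^\infty$ bound.

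\textbf{Step 1 (factorisation).} The hypothesis $\Vert f\Vert_{L^q}\leq C\Vert A^{\alpha/2} f\Vert_{L^p}$ is equivalent to the bounded extension $A^{-\alpha/2}:L^p\to L^q$. I would then exploit the functional-calculus identity
\[
T_t=e^{-tA}=t^{-\alpha/2}\,A^{-\alpha/2}\,\bigl[(tA)^{\alpha/2}e^{-tA}\bigr].
\]
The multiplier $g(\lambda)=\lambda^{\alpha/2}e^{-\lambda}$ is bounded on $[0,\infty)$ and analytic in a sector around the positive real axis, so Stein's theorem on spectral multipliers of symmetric sub-markovian semigroups (whose hypotheses are exactly the $L^1$ and $L^\infty$ equicontinuity assumed here) yields the uniform bound $\Vert (tA)^{\alpha/2}e^{-tA}\Vert_{L^p\to L^p}\leq C_p$ for every $t>0$. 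Combining this with the hypothesis gives $\Vert T_t\Vert_{L^p\to L^q}\leq C\,t^{-\alpha/2}$.

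\textbf{Step 2 (duality and iteration).} The symmetry of $T_t$ on $L^2$ immediately gives the dual bound $\Vert T_t\Vert_{L^{q'}\to L^{p'}}\leq C\,t^{-\alpha/2}$. Using $T_{2t}=T_tT_t$ and Riesz--Thorin interpolation against the trivial contraction bounds on $L^1$ and $L^\infty$ coming from the equicontinuity hypothesis, one iterates: at each stage the pair $(p_k,q_k)$ moves outward toward $(1,\infty)$, and the accumulated exponent of $t$ is a telescoping sum. The relation $\alpha=Q(1/p-1/q)$ is calibrated precisely so that summing the gains across iterations reproduces $\tfrac{Q}{2}\bigl(\tfrac{1}{1}-\tfrac{1}{\infty}\bigr)=\tfrac{Q}{2}$ at the endpoint, producing $\Vert T_t\Vert_{L^1\to L^\infty}\leq C\,t^{-Q/2}$. (In passing, the right-hand side of the stated conclusion appears to be a typographical slip: it should read $\Vert f\Vert_{L^1}$ rather than $\Vert f\Vert_{L^\infty}$, since the stated bound would otherwise contradict the $L^\infty$-contractivity built into the sub-markovian property.)

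\textbf{Main obstacle.} The delicate ingredient is the uniform $L^p$-bound for $(tA)^{\alpha/2}e^{-tA}$ when $\alpha/2$ is not an integer. This is where symmetry, sub-markovianity and $L^1,L^\infty$ equicontinuity are all essential: together they permit the analytic extension of $T_z$ to the right half-plane and thereby the application of Stein's analytic-interpolation functional calculus. Once this is in hand, Step 2 is essentially a bookkeeping exercise, although one must be careful in passing to the endpoints $p=1$ and $q=\infty$ so as not to lose the power of $t$.
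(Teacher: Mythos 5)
There is nothing in the paper to compare your argument against: Theorem \ref{Converse:HLT} is stated without proof and is imported verbatim from \cite[Theorem II.3.1]{Saloff-CosteBook} (Coulhon--Saloff-Coste--Varopoulos), as the paragraph preceding it makes explicit. Your two-step outline is, in substance, the proof given in that reference, and it is correct as a sketch. Step 1 is the standard reduction: the hypothesis is the boundedness of $A^{-\alpha/2}:L^p\to L^q$, and the uniform bound $\Vert (tA)^{\alpha/2}e^{-tA}\Vert_{L^p\to L^p}\leq C_p$ follows from Stein's theorem that a symmetric submarkovian semigroup extends to a bounded analytic semigroup on $L^p$ for $1<p<\infty$ (this is exactly where the $L^1$/$L^\infty$ equicontinuity and the $L^2$-symmetry enter), combined with the moment inequality to handle non-integer $\alpha/2$; together these give $\Vert T_t\Vert_{L^p\to L^q}\leq Ct^{-\alpha/2}$. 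Step 2 is Coulhon's extrapolation theorem (Theorem II.2.1 in the same book): duality, the semigroup law $T_{2t}=T_tT_t$, and Riesz--Thorin against the $L^1$ and $L^\infty$ bounds push the single off-diagonal estimate to $\Vert T_t\Vert_{L^1\to L^\infty}\leq Ct^{-Q/2}$, the relation $\alpha=Q(1/p-1/q)$ being exactly the calibration that makes the telescoping exponents sum to $Q/2$. The two places you flag as delicate --- the fractional-power functional calculus on $L^p$ and the convergence of the iterated constants at the endpoint --- are indeed the only nontrivial inputs, and both are classical; a complete write-up would cite or reprove them, but there is no gap in the strategy. Your observation that the final $\Vert f\Vert_{L^\infty(X,\mu)}$ in the statement should read $\Vert f\Vert_{L^1(X,\mu)}$ is also correct: the quantifier ranges over $f\in L^1(X,\mu)$ and the estimate is used later in the paper precisely in the operator-norm form $\Vert e^{-t\mathcal{L}}\Vert_{L^1\to L^\infty}\leq Ct^{-\tilde{Q}/2}$.
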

\subsection{Pseudo-differential operators via localisations}\label{S2.1} Now we present the preliminaries of the H\"ormander theory of
 pseudo-differential operators on compact manifolds used in this work. The setting of compact Lie groups appears as an essential case of manifolds with symmetries.  We refer to H\"ormander \cite{HormanderBook34} for details.\\
 
 Let $U$ be an open  subset of $\mathbb{R}^n.$ We  say that  the symbol $a\in C^{\infty}(T^{*}U),$ $T^{*}U=U\times \mathbb{R}^n,$  belongs to the H\"ormander class  $$S^m_{\rho,\delta}(T^{*}U),\,0\leqslant \rho,\delta\leqslant 1,$$ if for every compact subset $K\subset U$  the symbol inequalities
\begin{equation*}
  |\partial_{x}^\beta\partial_{\xi}^\alpha a(x,\xi)|\leqslant C_{\alpha,\beta,K}(1+|\xi|)^{m-\rho|\alpha|+\delta|\beta|},
\end{equation*} hold true uniformly in $x\in K$ for all  $\xi\in \mathbb{R}^n.$  A continuous linear operator $ A:C^\infty_0(U) \rightarrow C^\infty(U)$  
is a pseudo-differential operator of order $m$ and of  $(\rho,\delta)$-type, if there exists
a symbol $a\in S^m_{\rho,\delta}(T^{*}U)$ such that $A$ is the Kohn-Nirenberg quantisation of the symbol $a,$ namely, if
\begin{equation*}
    Af(x)=\smallint\limits_{\mathbb{R}^n}e^{2\pi i x\cdot \xi}a(x,\xi)(\mathscr{F}_{\mathbb{R}^n}{f})(\xi)d\xi,
\end{equation*} for all $f\in C^\infty_0(U),$ where
$$
    (\mathscr{F}_{\mathbb{R}^n}{f})(\xi):=\smallint\limits_Ue^{-i2\pi x\cdot \xi}f(x)dx
$$ is the  Euclidean Fourier transform of $f$ at $\xi\in \mathbb{R}^n.$ \\

Now, we extend this notion to smooth manifolds as follows.  Given a smooth closed manifold $M,$  $A:C^\infty_0(M)\rightarrow C^\infty(M) $ is a pseudo-differential operator of order $m$ and of $(\rho,\delta)$-type, with $ \rho\geqslant   1-\delta, $ and $0\leq \delta<\rho\leq 1,$  if for every local  coordinate system $\omega: M_{\omega}\subset M\rightarrow U_{\omega}\subset \mathbb{R}^n,$
and for every $\phi,\psi\in C^\infty_0(U_\omega),$ the operator
\begin{equation*}
    Tu:=\psi(\omega^{-1})^*A\omega^{*}(\phi u),\,\,u\in C^\infty(U_\omega),\footnote{As usually, $\omega^{*}$ and $(\omega^{-1})^*$ are the pullbacks, induced by the maps $\omega$ and $\omega^{-1}$ respectively.}
\end{equation*} is a standard pseudo-differential operator with symbol $a_T\in S^m_{\rho,\delta}(T^{*}U_\omega).$ In this case we write $A\in \Psi^m_{\rho,\delta}(M,\textnormal{loc}).$

\subsection{Positive sub-Laplacians and global pseudo-differential operators} Let $G$ be a compact Lie group with Lie algebra $\mathfrak{g}\simeq T_{e}G$, where $e$ is the neutral element of $G$, and let  
$$X=\{X_1,\cdots,X_{k} \}\subset \mathfrak{g}$$ 
be a system of $C^\infty$-vector fields. For all multi-index,
$$I=(i_1,\cdots,i_\omega)\in \{1,2,\cdots,k\}^{\omega}$$ of length $\omega\geqslant   1$, we denote by $$X_{I}:=[X_{i_1},[X_{i_2},\cdots [X_{i_{\omega-1}},X_{i_\omega}]\cdots]]$$
a commutator of length $\omega$, where $X_{I}:=X_{i}$ when $\omega=1$ and $I=(i)$. The system $X$ satisfies the  H\"ormander condition of step  $\kappa$ if $$\mathfrak{g}=\mathrm{span}\{X_I: |I|\leq \kappa\}.$$

Given a system $X=\{X_1,\cdots,X_{k}\}$ satisfying  the H\"ormander condition,  the operator defined as
\begin{equation*}
    \mathcal{L}\equiv \mathcal{L}_{X}:=-(X_{1}^2+\cdots +X_{k}^2),
\end{equation*} 
 is called the subelliptic Laplacian associated with the system $X$, or simply the sub-Laplacian associated to $X$. The subellipticity of $\mathcal{L}$ follows from the validity of the estimate, (see H\"ormander  \cite{Hormander1967} and Rothschild and Stein \cite{RS76}) 
\begin{equation}
    \Vert u\Vert_{H^s(G)}\leqslant C(\Vert \mathcal{L}u\Vert_{L^2(G)}+\Vert u\Vert_{L^2(G)}),
\end{equation} with $s=2/\kappa, $ while the Sobolev space $H^s$ of order $s$ is defined by the norm $$\Vert u\Vert_{H^s(G)}:=\Vert (1-\Delta)^{\frac{s}{2}} u\Vert_{L^2(G)}.$$ Here,  $\Delta$ is the negative Laplace-Beltrami operator on $G.$

Let us now introduce the Hausdorff dimension associated with the sub-Laplacian $\mathcal{L}$. For all $x\in G,$ let $H_{x}^\omega G$ be the linear subspace of $T_xG$ generated by the $X_i$'s and by the Lie brackets $$ [X_{j_1},X_{j_2}],[X_{j_1},[X_{j_2},X_{j_3}]],\cdots, [X_{j_1},[X_{j_2}, [X_{j_3},\cdots, X_{j_\omega}] ] ],$$ where  $\omega\leqslant \kappa.$ Then, H\"ormander's condition says that  $H_{x}^\kappa G=T_xG,$  $x\in G$, and we have that
\begin{equation*}
H_{x}^1G\subset H_{x}^2G \subset H_{x}^3G\subset \cdots \subset H_{x}^{\kappa-1}G\subset H_{x}^\kappa G= T_xG,\,\,x\in G.
\end{equation*} The dimension of every $H_x^\omega G$ is constant in $x\in G$, so we set $\dim H^\omega G:=\dim H_{x}^\omega G,$ for all $x\in G.$ The Hausdorff dimension can be defined as,
\begin{equation}\label{Hausdorff-dimension}
    Q:=\dim(H^1G)+\sum_{i=1}^{\kappa-1} (i+1)(\dim H^{i+1}G-\dim H^{i}G ).
\end{equation}

Let  $A$ be a continuous linear operator from $C^\infty(G)$ into $\mathscr{D}'(G),$ and let  $\widehat{G}$ be  the  unitary dual of $G.$  There exists a matrix-valued function \begin{equation}\label{symbol}a:G\times \widehat{G}\rightarrow \cup_{\ell\in \mathbb{N}} \mathbb{C}^{\ell\times \ell},\end{equation}  that we call the matrix symbol of $A,$ such that $a(x,\xi):=a(x,[\xi])\in \mathbb{C}^{d_\xi\times d_\xi}$ for every  $[\xi]\in \widehat{G},$ with $\xi:G\rightarrow \textnormal{Hom}(H_{\xi}),$ $H_{\xi}\cong \mathbb{C}^{d_\xi},$ and such that
\begin{equation}\label{RuzhanskyTurunenQuanti}
    Af(x)=\sum_{[\xi]\in \widehat{G}}d_\xi{\textnormal{Tr}}[\xi(x)a(x,\xi)\widehat{f}(\xi)],\,\,\forall f\in C^\infty(G).
\end{equation}We have denoted by
\begin{equation*}
    \widehat{f}(\xi)\equiv (\mathscr{F}f)(\xi):=\smallint\limits_{G}f(x)\xi(x)^*dx\in  \mathbb{C}^{d_\xi\times d_\xi},\,\,\,[\xi]\in \widehat{G},
\end{equation*} the group Fourier transform of $f$ at $\xi$ where the matrix representation of $\xi$ is induced by an orthonormal basis of the representation space $H_{\xi}.$
Correspondingly, one denotes the inverse Fourier transform of $g(\xi)\in \mathbb{C}^{d_\xi\times d_\xi}$ as
$$(\mathscr{F}^{-1}g)(x):=\sum_{[\xi]\in\widehat{G}}d_\xi \mathrm{Tr}(\xi(x)g(\xi)),\quad x\in G.$$
Note that the matrix-valued function $a$ in \eqref{symbol} satisfying \eqref{RuzhanskyTurunenQuanti} is unique, and satisfies the identity
\begin{equation*}
    a(x,\xi)=\xi(x)^{*}(A\xi)(x),\,\, A\xi:=(A\xi_{ij})_{i,j=1}^{d_\xi},\,\,\,[\xi]\in \widehat{G}.
\end{equation*}

We will use the notation $A=\textnormal{Op}(a)$ to indicate that $a:=\sigma_A(x,\xi)$ is the (unique) matrix-valued symbol associated with $A.$ 

 As defined in   \cite{RuzhanskyWirth2015}, a difference operator $Q_\xi: \mathscr{D}'(\widehat{G})\rightarrow \mathscr{D}'(\widehat{G})$ of order $k\in \mathbb{N}$  is defined via
\begin{equation}\label{taylordifferences}
    Q_\xi\widehat{f}(\xi)=\widehat{qf}(\xi),\,[\xi]\in \widehat{G},
\end{equation}  for some function $q\in C^\infty(G)$ vanishing of order $k$ at  $x=e.$ We  denote by $\textnormal{diff}^k(\widehat{G})$  the set of the difference operators of order $k.$ The associated difference operator to $q$ is denoted by $\Delta_q\equiv Q_\xi.$ A system  of difference operators (see  \cite{RuzhanskyWirth2015})
\begin{equation}\label{def.differenceoperators}
  \Delta_{\xi}^\alpha:=\Delta_{q_{(1)}}^{\alpha_1}\cdots   \Delta_{q_{(i)}}^{\alpha_{i}},\,\,\alpha=(\alpha_j)_{1\leqslant j\leqslant i}, 
\end{equation}
with $i\geq n$, is called  an admissible family,  if
\begin{equation}\label{def.admissible.diff}
    \textnormal{rank}\{\nabla q_{(j)}(e):1\leqslant j\leqslant i \}=\textnormal{dim}(G), \textnormal{   and   }\Delta_{q_{(j)}}\in \textnormal{diff}^{1}(\widehat{G}).
\end{equation}
An admissible family is said to be strongly admissible if, we also have the property 
\begin{equation}\label{def.stongly.admissible.diff}
    \bigcap_{j=1}^{i}\{x\in G: q_{(j)}(x)=0\}=\{e\}.
\end{equation}

\begin{remark}\label{remarkD}
We observe that matrix components of unitary irreducible representations induce difference operators of arbitrary order. Let us illustrate this fact as follows. If $\xi_{1},\xi_2,\cdots, \xi_{k},$ are  fixed irreducible and unitary  representations of the group $G$, which does not necessarily belong to the same equivalence class, then the matrix coefficients
\begin{equation}
 \xi_{\ell}(g)-I_{d_{\xi_{\ell}}}=[\xi_{\ell}(g)_{ij}-\delta_{ij}]_{i,j=1}^{d_{\xi_\ell}},\, \quad g\in G, \,\,1\leq \ell\leq k,
\end{equation} 
define the smooth functions 
$q^{\ell}_{ij}(g):=\xi_{\ell}(g)_{ij}-\delta_{ij}$, $ g\in G,$ and then  define the difference operators
\begin{equation}\label{Difference:op:rep}
    \mathbb{D}_{\xi_\ell,ij}:=\mathscr{F}(\xi_{\ell}(g)_{ij}-\delta_{ij})\mathscr{F}^{-1}.
\end{equation}
Then, by fixing $k\geq \mathrm{dim}(G)$ of these unitary representations with the property that its corresponding  family of difference operators is admissible one can define higher-order difference operators of this kind. Indeed,  let us fix a unitary representation $\xi_\ell$.
We omit the index $\ell.$ 
Then, for any given multi-index $\alpha\in \mathbb{N}_0^{d_{\xi_\ell}^2}$, with 
$|\alpha|=\sum_{i,j=1}^{d_{\xi_\ell}}\alpha_{ij}$, we write
$$\mathbb{D}^{\alpha}:=\mathbb{D}_{11}^{\alpha_{11}}\cdots \mathbb{D}^{\alpha_{d_{\xi_\ell}d_{\xi_\ell}}}_{d_{\xi_\ell}d_{\xi_\ell}}
$$ 
for a difference operator of order $|\alpha|$.
\end{remark}
The difference operators endow the unitary dual $\widehat{G}$ with a difference structure. Indeed, the  following Leibniz  formula holds true (see  \cite{RuzhanskyTurunenWirth2014} for details). We refer to Definition \ref{def.elliptic.classes} for the description via the group Fourier transform of the matrix-valued distributions in the class  $\mathscr{D}'(G\times \widehat{G}).$
\begin{proposition}[Leibniz rule for difference operators]\label{Leibnizrule} Let $G$ be a compact Lie group and let $\mathbb{D}^{\alpha},$ $\alpha\in \mathbb{N}^{d_{\xi_\ell}}_0,$ be the family of difference operators defined in  \eqref{Difference:op:rep}. Then, the following Leibniz rule 
\begin{align*}
    \mathbb{D}^{\alpha}(a_{1}a_{2})(x_0,\xi) =\sum_{ |\gamma|,|\varepsilon|\leqslant |\alpha|\leqslant |\gamma|+|\varepsilon| }C_{\varepsilon,\gamma}(\mathbb{D}^{\gamma}a_{1})(x_0,\xi) (\mathbb{D}^{\varepsilon}a_{2})(x_0,\xi), \quad x_{0}\in G,
\end{align*}holds  for all $a_{1},a_{2}\in \mathscr{D}'(G\times \widehat{G})$, where the summation is taken over all $\varepsilon, \gamma$ such that $|\varepsilon|,|\delta|\leq |\alpha|\leq |\gamma|+|\varepsilon|$.  
\end{proposition}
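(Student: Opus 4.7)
The plan is to reduce everything to the kernel side via the inverse Fourier transform, where the difference operators $\mathbb{D}^{\alpha}$ act as pointwise multiplication by the product of scalar functions $q_{ij}^{\ell}(g) = \xi_{\ell}(g)_{ij} - \delta_{ij}$, and symbol multiplication becomes (twisted) convolution. The heart of the matter will be a combinatorial ``coproduct'' identity expressing $q^{\alpha}(gh)$ as a bilinear combination of $q^{\gamma}(g)\,q^{\varepsilon}(h)$, which follows from the homomorphism property of the representations.

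\textbf{Step 1 (Kernel reformulation).} Writing $a_{i}(x,\xi) = \widehat{\kappa_{i}(x,\cdot)}(\xi)$ for $i=1,2$, the pointwise matrix product $a_{1}(x,\xi)\,a_{2}(x,\xi)$ is the group Fourier transform of the group convolution $\kappa_{2}(x,\cdot)\ast \kappa_{1}(x,\cdot)$ (the order is dictated by the convention $\widehat{f\ast g}(\xi)=\widehat{g}(\xi)\widehat{f}(\xi)$). From the definition \eqref{Difference:op:rep}, each $\mathbb{D}^{\alpha}$ corresponds to multiplication of the kernel by the scalar function $q^{\alpha}(z)$ before taking the Fourier transform. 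Hence $\mathbb{D}^{\alpha}(a_{1}a_{2})(x,\xi)$ equals the Fourier transform at $\xi$ of $z\mapsto q^{\alpha}(z)(\kappa_{2}\ast\kappa_{1})(x,z)$.

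\textbf{Step 2 (Coproduct identity for $q^{\alpha}$).} For a single generator $q = q_{ij}^{\ell}$, unitarity and the homomorphism property of $\xi_{\ell}$ give
\[
q_{ij}(gh)=q_{ij}(g)+q_{ij}(h)+\sum_{k}q_{ik}(g)\,q_{kj}(h),
\]
so that $q_{ij}$ vanishes at the identity and factorises with an extra ``cross term'' of the tensorial type. I would then prove by induction on $|\alpha|$ that
\[
q^{\alpha}(gh)=\sum_{|\gamma|,|\varepsilon|\leq|\alpha|\leq|\gamma|+|\varepsilon|}C_{\gamma,\varepsilon}\,q^{\gamma}(g)\,q^{\varepsilon}(h),
\]
with coefficients $C_{\gamma,\varepsilon}$ absorbing the combinatorics of the indices $ij,k\ell$. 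The constraint $|\gamma|,|\varepsilon|\leq|\alpha|\leq|\gamma|+|\varepsilon|$ arises naturally: the lower bound $|\gamma|+|\varepsilon|\geq|\alpha|$ comes from the fact that both the ``$q(g)$'' and ``$q(h)$'' summands and the ``cross'' summand contribute order-$|\alpha|$ overall, while the upper bound reflects that no individual term can increase the order.

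\textbf{Step 3 (Assembly).} Substituting the expansion of $q^{\alpha}(yw)$ into
\[
\int_{G}\!\!\int_{G}q^{\alpha}(yw)\,\kappa_{2}(x,y)\,\kappa_{1}(x,w)\,\xi(w)^{\ast}\xi(y)^{\ast}\,dw\,dy
\]
(obtained from the previous step after the change of variables $z=yw$ and the identity $\xi(yw)^{\ast}=\xi(w)^{\ast}\xi(y)^{\ast}$) and using that each $q^{\gamma}(y), q^{\varepsilon}(w)$ is a scalar, Fubini factors the double integral into the product of $\widehat{q^{\varepsilon}\kappa_{1}}(\xi)$ and $\widehat{q^{\gamma}\kappa_{2}}(\xi)$, i.e.\ into $(\mathbb{D}^{\varepsilon}a_{1})(x,\xi)(\mathbb{D}^{\gamma}a_{2})(x,\xi)$. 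Relabelling indices yields the stated formula.

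\textbf{Main difficulty.} The analytic content is essentially automatic once Steps 1 and 3 are set up; the real work is the bookkeeping in Step 2, namely producing a clean inductive proof of the coproduct identity with the correct double inequality on $|\gamma|,|\varepsilon|,|\alpha|$. An additional subtlety is that the functions $q^{\alpha}$ are polynomial combinations of matrix coefficients of several possibly inequivalent representations $\xi_{1},\dots,\xi_{k}$, so the inductive step must apply to each generator separately while the multi-index structure is preserved. Finally, since $a_{1},a_{2}$ are matrix-valued distributions in $\mathscr{D}'(G\times\widehat{G})$, I would carry out the argument first for $\kappa_{i}\in C^{\infty}(G\times G)$ and then extend to the distributional setting by density/continuity of the Fourier transform and of the difference operators.
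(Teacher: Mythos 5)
The paper does not prove Proposition \ref{Leibnizrule}; it is stated as a known fact with a pointer to \cite{RuzhanskyTurunenWirth2014}, and your argument is essentially the standard proof given there: pass to kernels, use the homomorphism identity $q_{ij}(gh)=q_{ij}(g)+q_{ij}(h)+\sum_k q_{ik}(g)q_{kj}(h)$ to get the coproduct expansion of $q^{\alpha}(gh)$ by induction, and refactor the convolution integral. Your bookkeeping is correct (including the order conventions $\widehat{f\ast g}=\widehat{g}\,\widehat{f}$ and $\xi(yw)^{*}=\xi(w)^{*}\xi(y)^{*}$, which make the $\gamma$--$\varepsilon$ labels land on the right factors after relabelling), so the proposal is sound and consistent with the cited source.
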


Now, we will introduce the  H\"ormander classes of matrix-symbols defined in \cite{Ruz}.
We identify every $Y\in\mathfrak{g}$ with the  differential operator $\partial_{Y}:C^\infty(G)\rightarrow \mathscr{D}'(G)$  defined  by
 \begin{equation*}
   \partial_{Y}f(x)=  (Y_{x}f)(x)=\frac{d}{dt}f(x\exp(tY) )|_{t=0}.
 \end{equation*}If $\{X_1,\cdots, X_n\}$ is a basis of the Lie algebra $\mathfrak{g},$ we use the standard multi-index notation
 $$ \partial_{X}^{\alpha}=X_{x}^{\alpha}=\partial_{X_1}^{\alpha_1}\cdots \partial_{X_n}^{\alpha_n}.     $$

By using this property, together with the following notation for the so-called  elliptic weight $$\langle\xi \rangle:=(1+\lambda_{[\xi]})^{1/2},\,\,[\xi]\in \widehat{G},$$ we can finally give the definition of global symbol classes. Here, $\lambda_{[\xi]},$ $[\xi]\in \widehat{G},$ denotes the corresponding eigenvalue of the positive Laplacian  (in a bijective manner) indexed by an equivalence class $[\xi]\in \widehat{G}.$ 
\begin{definition}\label{def.elliptic.classes}
Let $0\leqslant \delta,\rho\leqslant 1.$ Let $$\sigma:G\times \widehat{G}\rightarrow \bigcup_{[\xi]\in \widehat{G}}\mathbb{C}^{d_\xi\times d_\xi},$$ be a matrix-valued function such that for any $[\xi]\in \widehat{G},$ $\sigma(\cdot,[\xi])$ is smooth, and such that, for any element $x\in G$ there is a distribution $k_{x}\in \mathscr{D}'(G),$ of $C^\infty$-class  in $x,$ satisfying that $\sigma(x,\xi)=\widehat{k}_{x}(\xi),$ $[\xi]\in \widehat{G}$.  The collection of all matrix-valued symbols $\sigma=\sigma(x,\xi)$ satisfying these properties will be denoted  by $\mathscr{D}'(G\times \widehat{G}).$

We say that $\sigma \in \mathscr{S}^{m}_{\rho,\delta}(G)$ if, for all $\beta$ and  $\gamma $ multi-indices and for all $(x,[\xi])\in G\times \widehat{G}$,  the following  inequalities 
\begin{equation}\label{HormanderSymbolMatrix}
   \Vert \partial_{X}^\beta \Delta_\xi^{\gamma} \sigma(x,\xi)\Vert_{\textnormal{op}}\leqslant C_{\alpha,\beta}
    \langle \xi \rangle^{m-\rho|\gamma|+\delta|\beta|},
\end{equation} hold,
 where $\|\cdot\|_{op}$ denotes the $\ell^2\rightarrow\ell^2$ operator norm 
    \begin{equation}\label{defn.opnorm}
        \|\sigma(x,\xi)\|_{op}=\sup \{ \|\sigma(x,\xi)v\|_{\ell^2}: v\in \mathbb{C}^{d_\xi},\|v\|_{\ell^2}=1\}.
    \end{equation}
For $\sigma_A\in \mathscr{S}^m_{\rho,\delta}(G)$ we will write $A\in\Psi^m_{\rho,\delta}(G)\equiv\textnormal{Op}(\mathscr{S}^m_{\rho,\delta}(G)).$
\end{definition}

The global H\"ormander classes on compact Lie groups  describe the H\"ormander classes defined by local coordinate systems. We present the corresponding statement as follows. 
\begin{theorem}[Equivalence of classes, \cite{Ruz,RuzhanskyTurunenWirth2014}] Let $A:C^{\infty}(G)\rightarrow\mathscr{D}'(G)$ be a continuous linear operator and let us consider  $0\leq \delta<\rho\leq 1,$ with $\rho\geq 1-\delta.$ Then, $A\in \Psi^m_{\rho,\delta}(G,\textnormal{loc}),$ if and only if $\sigma_A\in \mathscr{S}^m_{\rho,\delta}(G),$ consequently
\begin{equation}\label{EQequivalence}
   \textnormal{Op}(\mathscr{S}^m_{\rho,\delta}(G))= \Psi^m_{\rho,\delta}(G,\textnormal{loc}),\,\,\,0\leqslant \delta<\rho\leqslant 1,\,\rho\geqslant   1-\delta.
\end{equation}
\end{theorem}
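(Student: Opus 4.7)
The plan is to prove the two inclusions separately, exploiting the interplay between the intrinsic description (difference operators on $\widehat{G}$ together with the invariant derivatives $\partial_X^\beta$) and the extrinsic one (Euclidean Hörmander symbols in local charts). The hypothesis $\rho\geq 1-\delta$ enters to guarantee diffeomorphism invariance of $\Psi^m_{\rho,\delta}(\mathbb{R}^n,\textnormal{loc})$, so that the local classes pulled back through different charts are compatible. Throughout, I will use the explicit formula $\sigma_A(x,\xi)=\xi(x)^{*}(A\xi)(x)$ and the fact that matrix coefficients of irreducible representations are smooth functions on $G$.

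For the inclusion $\textnormal{Op}(\mathscr{S}^m_{\rho,\delta}(G))\subset \Psi^m_{\rho,\delta}(G,\textnormal{loc})$, fix a chart $\omega\colon M_\omega\to U_\omega$ and cutoffs $\phi,\psi\in C^\infty_c(U_\omega)$ supported near $\omega(e)$ (a general chart reduces to this by a translation). Write $Au(x)=(u\ast R_A(x,\cdot))(x)$ with $R_A(x,y)=\mathscr{F}^{-1}_{\xi}\sigma_A(x,\xi)(y)$, and pass to local coordinates via the exponential map. Inserting a cutoff near $e$ and Taylor-expanding $\xi(\exp(y))$ in $y$, the matrix coefficients $y\mapsto \xi(\exp(y))_{ij}$ expand as polynomials whose coefficients, by construction \eqref{Difference:op:rep}, are exactly the difference operators $\mathbb{D}^\alpha$ evaluated at $\sigma_A$. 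A standard stationary phase/change of frequency variable then produces a local symbol $a_T(x,\eta)$ whose derivatives $\partial_x^\beta\partial_\eta^\alpha a_T$ are controlled by $\partial_X^\beta \Delta_\xi^\gamma \sigma_A(x,\xi)$, since $\langle\xi\rangle$ is comparable to $(1+|\eta|)$ for $\xi$ corresponding to the frequency $\eta$. Combining the seminorm bounds of $\sigma_A\in\mathscr{S}^m_{\rho,\delta}(G)$ yields $|\partial_x^\beta\partial_\eta^\alpha a_T(x,\eta)|\lesssim (1+|\eta|)^{m-\rho|\alpha|+\delta|\beta|}$.

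For the reverse inclusion, use $\sigma_A(x,\xi)=\xi(x)^{*}(A\xi)(x)$ entry by entry. Cover $G$ by finitely many charts, fix a subordinate partition of unity, and for each matrix coefficient $\xi_{ij}\in C^\infty(G)$ write $A\xi_{ij}$ as a finite sum of local pieces to which a Hörmander calculus applies. A difference operator $\mathbb{D}^\alpha$ from Remark \ref{remarkD} acts on $\sigma_A$ as multiplication of the right-convolution kernel by the functions $q^{\ell}_{ij}(g)=\xi_\ell(g)_{ij}-\delta_{ij}$, which vanish to order one at $e$; iterating $|\alpha|$ times produces a function vanishing to order $|\alpha|$ at $e$. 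Pushing this multiplication inside the oscillatory integral in a chart, each vanishing factor $q^{\ell}_{ij}$ corresponds, via Taylor expansion at the origin and integration by parts in the phase, to a gain of $\rho$ orders on the local symbol; this is the Leibniz-type mechanism already recorded in Proposition \ref{Leibnizrule}. Similarly, invariant derivatives $\partial_X^\beta$ translate to compositions of smooth vector fields in a chart, producing the $\delta|\beta|$-loss. Combining these two observations yields \eqref{HormanderSymbolMatrix}.

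The main technical obstacle is the rigorous matching, in both directions, of the abstract difference operators on $\widehat{G}$ with multiplication by functions vanishing at $e$ on $G$, and of the elliptic weight $\langle\xi\rangle$ with a Euclidean frequency $(1+|\eta|)$; this requires careful bookkeeping of error terms in the Taylor expansion of $\xi(\exp y)$, adequate control on the asymptotic expansion across charts, and uses the assumption $\rho\geq 1-\delta$ to absorb changes of coordinates. Once these identifications are set up, both inclusions follow by verifying the seminorm estimates term by term. The full combinatorics is essentially the one developed in \cite{Ruz,RuzhanskyTurunenWirth2014}, which we follow.
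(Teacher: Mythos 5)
The paper offers no proof of this theorem: it is quoted as a preliminary from \cite{Ruz,RuzhanskyTurunenWirth2014}, so there is no internal argument to compare yours against. Your outline is consistent with the strategy of those references --- localisation near the identity, Taylor expansion of the matrix coefficients $\xi(\exp y)$, the correspondence between difference operators and multiplication of the right-convolution kernel by functions vanishing at $e$, and the use of $\rho\geq 1-\delta$ to make the local class $\Psi^m_{\rho,\delta}(M,\textnormal{loc})$ diffeomorphism-invariant and hence well defined. But as written it is a roadmap rather than a proof: every step where the theorem could actually fail is asserted and then deferred wholesale to the references.

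Two points should be corrected even at the level of the sketch. First, the ``gain of $\rho$ orders per vanishing factor'' is not the mechanism of Proposition \ref{Leibnizrule}; that proposition is a Leibniz rule for difference operators acting on a \emph{product of two symbols} and plays no role in passing between global and local symbols. The correct mechanism is that multiplying the right-convolution kernel by a function vanishing at $e$ corresponds, in exponential coordinates, to frequency differentiation of the local symbol (modulo corrections from the nonlinearity of the coordinates), and each frequency derivative gains $\rho$ by the local estimates. Relatedly, the Taylor coefficients of $y\mapsto\xi(\exp y)_{ij}$ are scalars, not ``difference operators evaluated at $\sigma_A$''; the difference operators only appear after pairing with the kernel. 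Second, the assertion that ``$\langle\xi\rangle$ is comparable to $(1+|\eta|)$ for $\xi$ corresponding to the frequency $\eta$'' has no literal meaning: an equivalence class $[\xi]$ is not a single Euclidean frequency, and the comparison must be made at the level of kernel estimates (decay of $\widehat{q^\alpha R_A(x,\cdot)}(\xi)$ against the order of the singularity of $R_A(x,\cdot)$ at $e$ and its smoothness elsewhere). This identification is precisely where the content of the theorem lies, and where $\delta<\rho$ is needed in addition to $\rho\geq 1-\delta$ to control the Taylor remainders. As a summary of the strategy of \cite{Ruz,RuzhanskyTurunenWirth2014} the proposal is essentially accurate; as a proof it is incomplete.
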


\subsection{Subelliptic H\"ormander classes on compact Lie groups} 
 In order to define the subelliptic H\"ormander classes, we will use a suitable basis of the Lie algebra arising from Taylor expansions.  We explain the choice of this basis by means of the following lemma (see \cite[Section 3.1]{CR20}).

 \begin{lemma}\label{Taylorseries} Let $G$ be a compact Lie group of dimension $n.$ Let $\mathfrak{D}=\{\Delta_{q_{(j)}}\}_{1\leqslant j\leqslant n}$ be a strongly admissible collection of difference operators (for the definition see \eqref{def.admissible.diff} and \eqref{def.stongly.admissible.diff}). 
Then there exists a basis $X_{\mathfrak{D}}=\{X_{1,\mathfrak{D}},\cdots ,X_{n,\mathfrak{D}}\}$ of $\mathfrak{g}$ such that $$X_{j,\mathfrak{D}}q_{(k)}(\cdot^{-1})(e)=\delta_{jk}.
$$
Moreover, by using the multi-index notation $$\partial_{X}^{(\beta)}=\partial_{X_{1,\mathfrak{D}}}^{\beta_1}\cdots \partial_{X_{n,\mathfrak{D}}}^{\beta_n}, $$ for any $\beta\in\mathbb{N}_0^n,$
where $$\partial_{X_{i,\mathfrak{D}}}f(x)=  \frac{d}{dt}f(x\exp(tX_{i,\mathfrak{D}}) )|_{t=0},\,\,f\in C^{\infty}(G),$$ and denoting by
\begin{equation*}
    R_{x,N}^{f}(y)=f(xy)-\sum_{|\alpha|<N}q_{(1)}^{\alpha_1}(y^{-1})\cdots q_{(n)}^{\alpha_n}(y^{-1})\partial_{X}^{(\alpha)}f(x)
\end{equation*} 
the Taylor remainder, we have that 
\begin{equation*}
    | R_{x,N}^{f}(y)|\leqslant C|y|^{N}\max_{|\alpha|\leqslant N}\Vert \partial_{X}^{(\alpha)}f\Vert_{L^\infty(G)},
\end{equation*}
where the constant $C>0$ is dependent on $N,$ $G$ and $\mathfrak{D}$ (but not on $f\in C^\infty(G)).$ In addition we have that $\partial_{X}^{(\beta)}|_{x_1=x}R_{x_1,N}^{f}=R_{x,N}^{\partial_{X}^{(\beta)}f}$, and 
\begin{equation*}
    | \partial_{X}^{(\beta)}|_{y_1=y}R_{x,N}^{f}(y_1)|\leqslant C|y|^{N-|\beta|}\max_{|\alpha|\leqslant N-|\beta|}\Vert \partial_{X}^{(\alpha+\beta)}f\Vert_{L^\infty(G)},
\end{equation*}provided that $|\beta|\leqslant N.$
 \end{lemma}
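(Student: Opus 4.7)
The plan has two parts: (i) a linear-algebraic construction of the basis $X_{\mathfrak{D}}$, and (ii) a reduction of the Taylor expansion to the classical Euclidean Taylor theorem via a local chart adapted to $\mathfrak{D}$.

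First I would build the basis. By the admissibility hypothesis \eqref{def.admissible.diff}, the differentials $\{dq_{(j)}(e)\}_{j=1}^n$ form a basis of $T_e^*G\simeq\mathfrak{g}^*$; since the inversion map $\iota(g)=g^{-1}$ has differential $-\mathrm{Id}$ at $e$, the differentials $\{d(q_{(j)}\circ\iota)(e)\}=\{-dq_{(j)}(e)\}$ also form a basis of $\mathfrak{g}^*$. Define $X_{j,\mathfrak{D}}\in\mathfrak{g}$ as the associated dual basis, so that $X_{j,\mathfrak{D}}q_{(k)}(\cdot^{-1})(e)=\delta_{jk}$, and extend each to a left-invariant vector field on $G$. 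Next I would introduce the local chart $U(y):=(q_{(1)}(y^{-1}),\ldots,q_{(n)}(y^{-1}))$. Since $dU(e)$ is invertible, the Inverse Function Theorem yields a diffeomorphism $U\colon V\to U(V)\subset\mathbb{R}^n$ on a neighborhood $V$ of $e$; strong admissibility \eqref{def.stongly.admissible.diff} ensures $U^{-1}(0)=\{e\}$, and compactness of $G$ gives $|U(y)|\asymp|y|$ on some (possibly smaller) neighborhood of $e$. For fixed $x\in G$ the function $\tilde f(\xi):=f(x\,U^{-1}(\xi))$ is smooth on $U(V)$, and the Euclidean Taylor theorem with integral remainder yields
\begin{equation*}
\tilde f(\xi)=\sum_{|\alpha|<N}\frac{\xi^\alpha}{\alpha!}\partial_\xi^\alpha\tilde f(0)+R_N^{\mathrm{Eucl}}(\xi),\qquad |R_N^{\mathrm{Eucl}}(\xi)|\leq C|\xi|^N\sup_{|\alpha|=N}\|\partial_\xi^\alpha\tilde f\|_{L^\infty}.
\end{equation*}

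The decisive step is to re-express the Euclidean partial derivatives $\partial_\xi^\alpha\tilde f(0)$ in terms of the ordered left-invariant products $\partial_X^{(\alpha)}f(x)$. At $\xi=0$ the coordinate vector field $\partial/\partial\xi_j$ agrees with $X_{j,\mathfrak{D}}|_e$ by duality; away from $e$ the two frames differ, and iterating derivatives of order $|\alpha|\geq2$ generates both commutators among the $X_{j,\mathfrak{D}}$'s and \emph{Christoffel-type} corrections relating the coordinate frame to the left-invariant one. An induction on $|\alpha|$ produces an identity of the shape
\begin{equation*}
\partial_\xi^\alpha\tilde f(0)=\alpha!\,\partial_X^{(\alpha)}f(x)+\sum_{|\beta|<|\alpha|}c_{\alpha,\beta}\,\partial_X^{(\beta)}f(x),
\end{equation*}
with universal constants $c_{\alpha,\beta}$ depending only on $G$ and $\mathfrak{D}$. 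Substituting this into the Euclidean Taylor expansion, using $\xi(y)^\alpha=\prod_k q_{(k)}(y^{-1})^{\alpha_k}$, regrouping by the order of the left-invariant derivative, and absorbing the excess monomials of total order $\geq N$ into the remainder delivers the expansion claimed in the lemma together with $|R_{x,N}^f(y)|\leq C|y|^N\max_{|\alpha|\leq N}\|\partial_X^{(\alpha)}f\|_{L^\infty(G)}$ on a neighborhood of $e$. For $y$ outside this neighborhood the bound is automatic, because $|y|$ is bounded below and the right-hand side controls $\|f\|_{L^\infty(G)}$.

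I would then verify the two derivative formulas at the end of the statement, which are routine consequences of the construction. The identity $\partial_X^{(\beta)}|_{x_1=x}R_{x_1,N}^f=R_{x,N}^{\partial_X^{(\beta)}f}$ follows by differentiating the defining expression of $R_{x,N}^f$ in $x$, using that the coefficients $q_{(k)}(y^{-1})$ depend only on $y$ and that $\partial_X^{(\beta)}$ passes through $f(xy)$ by left-invariance. The estimate on $\partial_X^{(\beta)}|_{y_1=y}R_{x,N}^f(y_1)$ follows by differentiating the expansion $|\beta|$ times in the $y$-variable and re-applying the previous argument with order $N-|\beta|$ to the differentiated remainder, or equivalently by writing $R_{x,N}^f$ in integral-remainder form and differentiating under the integral sign. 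The hard part of the whole plan is the inductive step producing the constants $c_{\alpha,\beta}$: one must track all commutators among the $X_{j,\mathfrak{D}}$ and all corrections from the discrepancy between the coordinate and left-invariant frames on $V$, and verify that the resulting correction terms really have vanishing order $\geq N$ in $|y|$ uniformly in $x\in G$, so that they can be absorbed into the remainder without degrading its order.
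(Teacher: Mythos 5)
The paper itself contains no proof of this lemma: it is imported verbatim from \cite[Section 3.1]{CR20}, which in turn rests on the Taylor expansion machinery of \cite[Section 10.6]{Ruz}. So there is no internal argument to compare against, and I can only assess your proposal on its own terms. Your overall strategy — the dual basis from the rank condition \eqref{def.admissible.diff}, the chart $U(y)=(q_{(1)}(y^{-1}),\dots,q_{(n)}(y^{-1}))$, Euclidean Taylor in that chart, reduction of the global bound to a neighbourhood of $e$ by compactness — is the standard route and matches the cited source in spirit. The construction of $X_{\mathfrak{D}}$ and the first part of the lemma are fine.

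The genuine gap sits exactly at the step you yourself call ``the hard part'', and it does not close as written. The identity $\partial_\xi^\alpha\tilde f(0)=\alpha!\,\partial_X^{(\alpha)}f(x)+\sum_{|\beta|<|\alpha|}c_{\alpha,\beta}\,\partial_X^{(\beta)}f(x)$ is false: writing the coordinate frame as $V_j=\sum_l a_{jl}X_{l,\mathfrak{D}}$ with $a_{jl}(e)=\delta_{jl}$ gives, e.g., $\partial_{\xi_j}^2\tilde f(0)=X_{j,\mathfrak{D}}^2f(x)+(\text{first-order terms})$, i.e.\ leading coefficient $1$, not $\alpha!=2$. With the correct identity, the Euclidean formula produces the coefficients $\frac{1}{\alpha!}\,q^\alpha(y^{-1})$ rather than $q^\alpha(y^{-1})$, and the discrepancy $\left(1-\frac{1}{\alpha!}\right)q^\alpha(y^{-1})\,\partial_X^{(\alpha)}f(x)$ is of order $|\alpha|<N$ in $|y|$, so it cannot be ``absorbed into the remainder''; the same is true of the regrouped correction terms $\frac{c_{\alpha,\beta}}{\alpha!}q^{\alpha}(y^{-1})\partial_X^{(\beta)}f(x)$ with $|\beta|<|\alpha|<N$, which perturb the coefficients of the lower-order derivatives by monomials that are \emph{not} $O(|y|^N)$. (A one-dimensional check on the torus with $q(y)=\sin y$ already shows that the expansion with bare ordered products and no factorials misses $\frac{y^2}{2}f''(x)$ at order two.) Closing this requires either carrying the $\frac{1}{\alpha!}$ and the frame corrections explicitly — which is why \cite{Ruz} defines the Taylor operators $\partial_x^{(\alpha)}$ through the chart rather than as bare ordered products of left-invariant fields — or an actual proof that the operators so produced coincide with $\partial_X^{(\alpha)}$; your inductive scheme, as set up, would prove the former statement, not the one claimed. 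Finally, the identity $\partial_{X}^{(\beta)}|_{x_1=x}R_{x_1,N}^{f}=R_{x,N}^{\partial_{X}^{(\beta)}f}$ is not ``routine by left-invariance'': differentiating $f(x_1y)$ in $x_1$ inserts $\exp(tX)$ to the \emph{left} of $y$, while $(\partial_Xf)(xy)$ inserts it to the right, and $\partial_X^{(\beta)}\partial_X^{(\alpha)}\neq\partial_X^{(\alpha)}\partial_X^{(\beta)}$ in general; this step also needs an argument rather than an appeal to the $y$-dependence of the coefficients.
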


Denoting by $\Delta_{\xi}^\alpha:=\Delta_{q_{(1)}}^{\alpha_1}\cdots   \Delta_{q_{(n)}}^{\alpha_{n}},$ we can introduce the subelliptic H\"ormander class of symbols of order $m\in \mathbb{R}$ and of type $(\rho,\delta)$. We will use the notation    $\widehat{ \mathcal{M}}$ to indicate   the matrix symbol of  $\mathcal{M}:=(1+\mathcal{L})^{\frac{1}{2}}.$ Also, for every $[\xi]\in \widehat{G}$ and $s\in \mathbb{R},$ we define the subellliptic matrix weight,
   \begin{equation*}
       \widehat{ \mathcal{M}}(\xi)^{s}:=\textnormal{diag}[(1+\nu_{ii}(\xi)^2)^{\frac{s}{2}}]_{1\leqslant i\leqslant d_\xi},
   \end{equation*} 
   where $\widehat{\mathcal{L}}(\xi)=:\textnormal{diag}[\nu_{ii}(\xi)^2]_{1\leqslant i\leqslant d_\xi}$ is the symbol of the sub-Laplacian $\mathcal{L}$ at $[\xi]$, as the symbol of the operator $\mathcal{M}_s:=(1+\mathcal{L})^{\frac s 2}.$

\begin{definition}[Subelliptic H\"ormander classes]\label{contracted''}
   Let $G$ be a compact Lie group and let $0\leqslant \delta,\rho\leqslant 1.$ Let us consider a sub-Laplacian $\mathcal{L}=-(X_1^2+\cdots +X_{k}^2)$ on $G,$ where the system of vector fields $X=\{X_i\}_{i=1}^{k}$ satisfies the H\"ormander condition of step $\kappa$.  We say that $\sigma \in {S}^{m,\,\mathcal{L}}_{\rho,\delta}(G\times \widehat{G})$ if for all $\alpha, \beta\in \mathbb{N}^n_0$ 
   \begin{equation}\label{InIC}
      p_{\alpha,\beta,\rho,\delta,m,\textnormal{left}}(\sigma)':= \sup_{(x,[\xi])\in G\times \widehat{G} }\Vert \widehat{ \mathcal{M}}(\xi)^{(\rho|\alpha|-\delta|\beta|-m)}\partial_{X}^{(\beta)} \Delta_{\xi}^{\alpha}\sigma(x,\xi)\Vert_{\textnormal{op}} <\infty,
   \end{equation}
   \begin{equation}\label{InIIC}
      p_{\alpha,\beta,\rho,\delta,m,\textnormal{right}}(\sigma)':= \sup_{(x,[\xi])\in G\times \widehat{G} }\Vert (\partial_{X}^{(\beta)} \Delta_{\xi}^{\alpha} \sigma(x,\xi) ) \widehat{ \mathcal{M}}(\xi)^{(\rho|\alpha|-\delta|\beta|-m)}\Vert_{\textnormal{op}} <\infty,
   \end{equation} 
   where $\|\cdot\|_{\mathrm{op}}$ is as in \eqref{defn.opnorm}.
  \end{definition}
The previous Definition \ref{contracted''} can be simplified in the sense that \eqref{InIC} and \eqref{InIIC} are equivalent conditions. Moreover,  The following conditions are equivalent, see Theorem 4.4 of \cite{CR20}.
\begin{itemize}
    \item[A.] For every $\alpha,\beta\in \mathbb{N}_0^n,$ \begin{equation}\label{InI2}
      p_{\alpha,\beta,\rho,\delta,m,\textnormal{left}}(a):= \sup_{(x,[\xi])\in G\times \widehat{G} }\Vert \widehat{ \mathcal{M}}(\xi)^{\frac{1}{\kappa}(\rho|\alpha|-\delta|\beta|-m)}\partial_{X}^{(\beta)} \Delta_{\xi}^{\alpha}a(x,\xi)\Vert_{\textnormal{op}} <\infty.
   \end{equation}
   \item[B.]For every $\alpha,\beta\in \mathbb{N}_0^n,$ \begin{equation}\label{InII2}
      p_{\alpha,\beta,\rho,\delta,m,\textnormal{right}}(a):= \sup_{(x,[\xi])\in G\times \widehat{G} }\Vert (\partial_{X}^{(\beta)} \Delta_{\xi}^{\alpha} a(x,\xi) ) \widehat{ \mathcal{M}}(\xi)^{\frac{1}{\kappa}(\rho|\alpha|-\delta|\beta|-m)}\Vert_{\textnormal{op}} <\infty.
   \end{equation}
   \item[C.] For all $r\in \mathbb{R},$ $\alpha,\beta\in \mathbb{N}_0^n,$
    \begin{equation}\label{InI2X}
      p_{\alpha,\beta,\rho,\delta,m,r}(a):= \sup_{(x,[\xi])\in G\times \widehat{G} }\Vert \widehat{ \mathcal{M}}(\xi)^{\frac{1}{\kappa}(\rho|\alpha|-\delta|\beta|-m-r)}\partial_{X}^{(\beta)} \Delta_{\xi}^{\alpha}a(x,\xi)\widehat{ \mathcal{M}}(\xi)^{\frac{r}{\kappa}}\Vert_{\textnormal{op}} <\infty.
   \end{equation}
   \item[D.] There exists $r_0\in \mathbb{R},$ such that for every $\alpha,\beta\in \mathbb{N}_0^n,$
    \begin{equation}\label{InI2X''}
      p_{\alpha,\beta,\rho,\delta,m,r_0}(a):= \sup_{(x,[\xi])\in G\times \widehat{G} }\Vert \widehat{ \mathcal{M}}(\xi)^{\frac{1}{\kappa}(\rho|\alpha|-\delta|\beta|-m-r_0)}\partial_{X}^{(\beta)} \Delta_{\xi}^{\alpha}a(x,\xi)\widehat{ \mathcal{M}}(\xi)^{\frac{r_0}{\kappa}}\Vert_{\textnormal{op}} <\infty.
   \end{equation}
\end{itemize}

By following the usual nomenclature, we  define:
\begin{equation*}
    \textnormal{Op}({S}^{m,\,\mathcal{L}}_{\rho,\delta}(G\times \widehat{G})):=\{A:C^{\infty}(G)\rightarrow \mathscr{D}'(G):\sigma_A\equiv\widehat{A}(x,\xi)\in {S}^{m,\,\mathcal{L}}_{\rho,\delta}(G\times \widehat{G}) \},
\end{equation*} with
\begin{equation*}
    Af(x)=\sum_{[\xi]\in \widehat{G}}d_\xi \textnormal{{Tr}}(\xi(x)\widehat{A}(x,\xi)\widehat{f}(\xi)),\,\,\,f\in C^\infty(G),\,x\in G.  
\end{equation*}

The decay properties of subelliptic symbols are summarized in the following lemma (see \cite[Chapter 4]{CR20}), where we present a necessary (but not a sufficient) condition in order that the matrix-symbol $a:=a(x,\xi)$ belongs to the class ${S}^{m,\,\mathcal{L}}_{\rho,\delta}(G\times \widehat{G}).$

\begin{lemma}\label{lemadecaying1}
Let $G$ be a compact Lie group and  let $0\leqslant \delta,\rho\leqslant 1.$ If $a\in {S}^{m,\,\mathcal{L}}_{\rho,\delta}(G\times \widehat{G}),$ then for every $\alpha,\beta\in \mathbb{N}_0^n,$ there exists $C_{\alpha,\beta}>0$ satisfying the estimates
\begin{equation*}
    \Vert \partial_{X}^{(\beta)} \Delta_{\xi}^{\alpha}a(x,\xi)\Vert_{\textnormal{op}}\leqslant C_{\alpha,\beta}\sup_{1\leqslant i\leqslant d_\xi}(1+\nu_{ii}(\xi)^2)^{\frac{m-\rho|\alpha|+\delta|\beta|}{2 }},
\end{equation*}uniformly in $(x,[\xi])\in G\times \widehat{G}.$ 
\end{lemma}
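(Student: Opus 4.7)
The plan is to read this off directly from the defining seminorms of Definition \ref{contracted''} by multiplying and dividing by the subelliptic matrix weight and then controlling the remaining factor via the diagonal structure of $\widehat{\mathcal{M}}(\xi)^{s}$.

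First I would fix $a\in S^{m,\mathcal{L}}_{\rho,\delta}(G\times\widehat{G})$ and set $s=\rho|\alpha|-\delta|\beta|-m$, so that hypothesis \eqref{InIC} in Definition \ref{contracted''} reads
\begin{equation*}
\sup_{(x,[\xi])\in G\times\widehat{G}}\bigl\Vert \widehat{\mathcal{M}}(\xi)^{s}\,\partial_{X}^{(\beta)}\Delta_{\xi}^{\alpha}a(x,\xi)\bigr\Vert_{\mathrm{op}} \;\leq\; p_{\alpha,\beta,\rho,\delta,m,\mathrm{left}}'(a)\,<\,\infty.
\end{equation*}
Since $\widehat{\mathcal{M}}(\xi)^{s}$ is invertible (its diagonal entries $(1+\nu_{ii}(\xi)^{2})^{s/2}$ are strictly positive), I can write
\begin{equation*}
\partial_{X}^{(\beta)}\Delta_{\xi}^{\alpha}a(x,\xi) \;=\; \widehat{\mathcal{M}}(\xi)^{-s}\bigl(\widehat{\mathcal{M}}(\xi)^{s}\,\partial_{X}^{(\beta)}\Delta_{\xi}^{\alpha}a(x,\xi)\bigr).
\end{equation*}

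Next I would apply submultiplicativity of the operator norm together with the explicit diagonal description of the weight. Because $\widehat{\mathcal{M}}(\xi)^{-s}=\mathrm{diag}\bigl[(1+\nu_{ii}(\xi)^{2})^{-s/2}\bigr]_{1\le i\le d_\xi}$ is a diagonal (hence normal) matrix, its $\ell^{2}\to\ell^{2}$ norm is exactly the largest modulus of its diagonal entries, so
\begin{equation*}
\bigl\Vert \widehat{\mathcal{M}}(\xi)^{-s}\bigr\Vert_{\mathrm{op}} \;=\; \sup_{1\le i\le d_\xi}(1+\nu_{ii}(\xi)^{2})^{-s/2} \;=\; \sup_{1\le i\le d_\xi}(1+\nu_{ii}(\xi)^{2})^{(m-\rho|\alpha|+\delta|\beta|)/2}.
\end{equation*}
Combining this with the bound from the first step yields
\begin{equation*}
\bigl\Vert \partial_{X}^{(\beta)}\Delta_{\xi}^{\alpha}a(x,\xi)\bigr\Vert_{\mathrm{op}} \;\leq\; p_{\alpha,\beta,\rho,\delta,m,\mathrm{left}}'(a)\,\sup_{1\le i\le d_\xi}(1+\nu_{ii}(\xi)^{2})^{(m-\rho|\alpha|+\delta|\beta|)/2},
\end{equation*}
uniformly in $(x,[\xi])\in G\times\widehat{G}$, which is the claimed estimate with $C_{\alpha,\beta}=p_{\alpha,\beta,\rho,\delta,m,\mathrm{left}}'(a)$.

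There is essentially no analytic obstacle here; the content of the lemma is bookkeeping, and the only point that deserves a line of verification is the identity $\Vert\widehat{\mathcal{M}}(\xi)^{-s}\Vert_{\mathrm{op}}=\sup_{i}(1+\nu_{ii}(\xi)^{2})^{-s/2}$, which rests on the fact that the sub-Laplacian $\mathcal{L}=-\sum X_{j}^{2}$ is diagonalised on each representation space so that $\widehat{\mathcal{M}}(\xi)$ is simultaneously diagonalisable with the representation basis; this is exactly the convention adopted just above Definition \ref{contracted''}. Once this is granted, the estimate is immediate.
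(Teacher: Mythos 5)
Your argument is correct: the paper states this lemma without proof (citing \cite[Chapter 4]{CR20}), and your direct verification---peeling off the invertible diagonal weight $\widehat{\mathcal{M}}(\xi)^{s}$ with $s=\rho|\alpha|-\delta|\beta|-m$ from the seminorm \eqref{InIC} and using that the operator norm of a positive diagonal matrix is the supremum of its entries---is exactly the standard bookkeeping the cited source carries out. Note only that you are (correctly) working with the normalisation of Definition \ref{contracted''} rather than the $\kappa$-rescaled equivalent seminorms listed afterwards, which is what makes the exponent $\frac{m-\rho|\alpha|+\delta|\beta|}{2}$ come out as stated.
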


In the next theorem we describe the fundamental properties of the subelliptic calculus  \cite{CR20}, like compositions, adjoints, and boundedness properties.
\begin{theorem}\label{calculus} Let $0\leqslant \delta<\rho\leqslant 1,$ and let  $\Psi^{m,\,\mathcal{L}}_{\rho,\delta}:=\textnormal{Op}({S}^{m,\,\mathcal{L}}_{\rho,\delta}(G\times \widehat{G})),$ for every $m\in \mathbb{R}.$ Then,
\begin{itemize}
    \item [-] The mapping $A\mapsto A^{*}:\Psi^{m,\,\mathcal{L}}_{\rho,\delta}\rightarrow \Psi^{m,\,\mathcal{L}}_{\rho,\delta}$ is a continuous linear mapping between Fr\'echet spaces and  the  symbol of $A^*,$ $\sigma_{A^*}(x,\xi)$ satisfies the asymptotic expansion,
 \begin{equation*}
    \widehat{A^{*}}(x,\xi)\sim \sum_{|\alpha|= 0}^\infty\Delta_{\xi}^\alpha\partial_{X}^{(\alpha)} (\widehat{A}(x,\xi)^{*}).
 \end{equation*} This means that, for every $N\in \mathbb{N},$ and for all $\ell\in \mathbb{N},$
\begin{equation*}
   \Small{ \Delta_{\xi}^{\alpha_\ell}\partial_{X}^{(\beta)}\left(\widehat{A^{*}}(x,\xi)-\sum_{|\alpha|\leqslant N}\Delta_{\xi}^\alpha\partial_{X}^{(\alpha)} (\widehat{A}(x,\xi)^{*}) \right)\in {S}^{m-(\rho-\delta)(N+1)-\rho\ell+\delta|\beta|,\,\mathcal{L}}_{\rho,\delta}(G\times\widehat{G}) },
\end{equation*} where $|\alpha_\ell|=\ell.$
\item [-] The mapping $(A_1,A_2)\mapsto A_1\circ A_2: \Psi^{m_1,\,\mathcal{L}}_{\rho,\delta}\times \Psi^{m_2,\,\mathcal{L}}_{\rho,\delta}\rightarrow \Psi^{m_1+m_2,\,\mathcal{L}}_{\rho,\delta}$ is a continuous bilinear mapping between Fr\'echet spaces, and the symbol of $A=A_{1}\circ A_2$  is given by the asymptotic formula
\begin{equation*}
    \sigma_A(x,\xi)\sim \sum_{|\alpha|= 0}^\infty(\Delta_{\xi}^\alpha\widehat{A}_{1}(x,\xi))(\partial_{X}^{(\alpha)} \widehat{A}_2(x,\xi)),
\end{equation*}which, in particular, means that, for every $N\in \mathbb{N},$ and for all $\ell \in\mathbb{N},$
\begin{align*}
    &\Delta_{\xi}^{\alpha_\ell}\partial_{X}^{(\beta)}\left(\sigma_A(x,\xi)-\sum_{|\alpha|\leqslant N}  (\Delta_{\xi}^\alpha\widehat{A}_{1}(x,\xi))(\partial_{X}^{(\alpha)} \widehat{A}_2(x,\xi))  \right)\\
    &\hspace{2cm}\in {S}^{m_1+m_2-(\rho-\delta)(N+1)-\rho\ell+\delta|\beta|,\,\mathcal{L}}_{\rho,\delta}(G\times \widehat{G}),
\end{align*}for all  $\alpha_\ell \in \mathbb{N}_0^n$ with $|\alpha_\ell|=\ell.$
\item [-] For  $0\leqslant \delta< \rho\leqslant    1,$  (or for $0\leq \delta\leq \rho\leq 1,$  $\delta<1/\kappa$) let us consider a continuous linear operator $A:C^\infty(G)\rightarrow\mathscr{D}'(G)$ with symbol  $\sigma\in {S}^{0,\,\mathcal{L}}_{\rho,\delta}(G\times \widehat{G})$. Then $A$ extends to a bounded operator from $L^2(G)$ to  $L^2(G).$ 
\end{itemize}
\end{theorem}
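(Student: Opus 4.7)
The plan is to establish the three assertions in the order stated, since the adjoint and composition formulas feed directly into the parametrix argument needed for the $L^2$-boundedness. Throughout, the key working tools are the strongly admissible family $\mathfrak{D}$ of difference operators and the dual basis $X_{\mathfrak{D}}$ provided by Lemma \ref{Taylorseries}, together with the decay estimate of Lemma \ref{lemadecaying1} and the characterisation A--D of the symbol seminorms which permits us to shuffle powers of $\widehat{\mathcal{M}}$ freely from the left to the right of a matrix-symbol.

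\textbf{Adjoint formula.} I would first write $A$ in terms of its right-convolution kernel $R_{A}(x,\cdot)$ and express the kernel of $A^*$ by $R_{A^*}(x,z)=R_{A}(xz,z^{-1})^*$ (modulo the standard bookkeeping of the Kohn--Nirenberg-type quantisation \eqref{RuzhanskyTurunenQuanti}). Taking the group Fourier transform in $z$ gives $\sigma_{A^*}(x,\xi)$ as the Fourier transform of $R_{A}(xz,z^{-1})^*$. Expanding $R_{A}(xz,z^{-1})^*$ in the $x$-variable with the Taylor formula of Lemma \ref{Taylorseries} at the origin in $z$, the coordinate monomials $q_{(1)}^{\alpha_1}(z^{-1})\cdots q_{(n)}^{\alpha_n}(z^{-1})$ convert under Fourier transform into the difference operators $\Delta_{\xi}^{\alpha}$, and the derivatives $\partial_{X}^{(\alpha)}$ fall on $\sigma_{A}(x,\xi)^{*}$. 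Summing the Taylor main terms yields the asymptotic series, and the $N$-th remainder is bounded by combining the kernel decay estimate from Lemma \ref{Taylorseries} with Lemma \ref{lemadecaying1}; applying further $\Delta_{\xi}^{\alpha_\ell}\partial_{X}^{(\beta)}$ to this remainder and invoking the equivalence of A--D to commute $\widehat{\mathcal{M}}$-powers past matrix-valued factors produces the claimed class ${S}^{m-(\rho-\delta)(N+1)-\rho\ell+\delta|\beta|,\mathcal{L}}_{\rho,\delta}$.

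\textbf{Composition formula.} For $f\in C^\infty(G)$ write $(A_1A_2)f(x)=\sum_{[\xi]}d_\xi\mathrm{Tr}(\xi(x)\sigma_{A_{1}}(x,\xi)\widehat{A_{2}f}(\xi))$, substitute $\widehat{A_{2}f}(\xi)=\int_G \xi(y)^{*}(A_{2}f)(y)\,dy$, and bring $\sigma_{A_{2}}(y,\xi)$ into the expression. The obstruction to a closed formula is that $\sigma_{A_{2}}$ depends on $y$ rather than $x$; using the Taylor formula of Lemma \ref{Taylorseries} applied to $y\mapsto \sigma_{A_2}(y,\xi)$ around $y=x$ expressed in the coordinates $q_{(j)}(y^{-1}x)$, the $j$-th coordinate again transforms under the group Fourier transform into $\Delta_{q_{(j)}}$ acting on $\sigma_{A_{1}}$, while $\partial_X^{(\alpha)}$ lands on $\sigma_{A_{2}}$. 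This gives the main terms $\sum_{|\alpha|\le N}(\Delta_{\xi}^{\alpha}\sigma_{A_{1}})(\partial_{X}^{(\alpha)}\sigma_{A_{2}})$. The Leibniz rule of Proposition \ref{Leibnizrule} together with Lemma \ref{lemadecaying1} controls the additional application of $\Delta_{\xi}^{\alpha_\ell}\partial_{X}^{(\beta)}$ to the remainder, placing it into ${S}^{m_1+m_2-(\rho-\delta)(N+1)-\rho\ell+\delta|\beta|,\mathcal{L}}_{\rho,\delta}$. Continuity in the Fr\'echet topology for both formulas is read off by tracking the finite number of seminorms of $\sigma_A$, resp.\ of $(\sigma_{A_1},\sigma_{A_2})$, used in the remainder bound.

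\textbf{$L^2$-boundedness.} Given $A=\mathrm{Op}(\sigma)$ with $\sigma\in S^{0,\mathcal{L}}_{\rho,\delta}$, I would run the classical square-root parametrix argument powered by the calculus just proved. By the symbol estimates $M:=1+\sup_{x,[\xi]}\|\sigma(x,\xi)\|_{\mathrm{op}}^{2}<\infty$; by the adjoint and composition formulas $A^{*}A\in\Psi^{0,\mathcal{L}}_{\rho,\delta}$ with principal symbol $\sigma(x,\xi)^{*}\sigma(x,\xi)$, and the symbol $c_{0}(x,\xi):=(M\,I_{d_\xi}-\sigma(x,\xi)^{*}\sigma(x,\xi))^{1/2}$ is checked to lie in $S^{0,\mathcal{L}}_{\rho,\delta}$ by direct differentiation (since the eigenvalues of the argument stay in a compact subset of $(0,\infty)$). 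Inductively construct correctors $c_{j}\in S^{-j(\rho-\delta),\mathcal{L}}_{\rho,\delta}$, each built by solving an algebraic equation that cancels the previous error, so that for $B_{N}:=\mathrm{Op}(c_{0}+\dots+c_{N})$ one has $B_{N}^{*}B_{N}+A^{*}A=M\,I+R_{N}$ with $R_{N}\in\Psi^{-(\rho-\delta)(N+1),\mathcal{L}}_{\rho,\delta}$. Choosing $N$ large enough that the order of $R_{N}$ pushes its kernel below the Sobolev embedding threshold controlled by the Hausdorff dimension $Q$, $R_{N}$ is bounded on $L^{2}(G)$, and the identity $\|Af\|_{L^{2}}^{2}=M\|f\|_{L^{2}}^{2}-\|B_{N}f\|_{L^{2}}^{2}+(R_{N}f,f)_{L^{2}}\le M\|f\|_{L^{2}}^{2}+\|R_{N}\|_{L^{2}\to L^{2}}\|f\|_{L^{2}}^{2}$ gives the claim.

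\textbf{Main obstacle.} The hard technical point is the uniform control of the Taylor remainders in both asymptotic expansions. One must show that the full gain $(\rho-\delta)(N+1)$ in the subelliptic weight $\widehat{\mathcal{M}}$ is realised after Fourier transform and after arbitrary further action by $\Delta_{\xi}^{\alpha_\ell}\partial_{X}^{(\beta)}$, uniformly in $[\xi]\in\widehat{G}$. This is where the hypothesis $\rho>\delta$ enters decisively (and why the parametrix iteration in the $L^{2}$ step terminates), and it is also where the equivalence A--D just after Definition \ref{contracted''} is indispensable, because it is the only device allowing one to commute the matrix weights $\widehat{\mathcal{M}}(\xi)^{s}$ past non-commuting matrix blocks $\partial_{X}^{(\beta)}\Delta_{\xi}^{\alpha}\sigma(x,\xi)$ without losing orders.
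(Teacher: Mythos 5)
First, a structural point: the paper does not prove Theorem \ref{calculus} at all. It is stated in the preliminaries as a quotation of the subelliptic calculus developed in \cite{CR20}, so there is no internal proof to compare against, and your proposal has to be judged as a self-contained reconstruction of the results of \cite{CR20}. Your treatment of the first two bullets (adjoint and composition) follows the standard Taylor-expansion scheme of the Ruzhansky--Turunen calculus, which is indeed the strategy used in \cite{CR20}: expand the kernel/symbol in the coordinates $q_{(j)}$ via Lemma \ref{Taylorseries}, convert the monomials into difference operators under the group Fourier transform, and control remainders via Lemma \ref{lemadecaying1}, the Leibniz rule, and the equivalences A--D. As a sketch this is the right route; the deferred uniform remainder estimates are exactly where the bulk of the work in \cite{CR20} lies, but I would not call that a gap in an outline.

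The genuine gap is in the third bullet. The statement asserts $L^2$-boundedness not only for $0\leq\delta<\rho\leq 1$ but also (parenthetically) for $0\leq\delta\leq\rho\leq 1$ with $\delta<1/\kappa$, which includes $\rho=\delta$ and in particular the Calder\'on--Vaillancourt case ${S}^{0,\,\mathcal{L}}_{0,0}(G\times\widehat{G})$. Your square-root parametrix produces correctors in ${S}^{-j(\rho-\delta),\,\mathcal{L}}_{\rho,\delta}$ and a remainder of order $-(\rho-\delta)(N+1)$; when $\rho=\delta$ this gains nothing, the iteration never yields a smoothing remainder, and the argument collapses --- you yourself remark that the iteration ``terminates'' only because $\rho>\delta$. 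So the proposal cannot cover the full statement; the $\rho=\delta$, $\delta<1/\kappa$ case requires a different mechanism (in \cite{CR20} one exploits the two-sided comparison $\langle\xi\rangle^{1/\kappa}\lesssim \widehat{\mathcal{M}}(\xi)\lesssim\langle\xi\rangle$ to pass between subelliptic and elliptic classes and then invokes $L^2$-results for the latter, rather than running a parametrix inside the subelliptic calculus). A second, smaller but real, defect: you assert that $c_{0}=(M I_{d_\xi}-\sigma^{*}\sigma)^{1/2}$ lies in ${S}^{0,\,\mathcal{L}}_{\rho,\delta}$ ``by direct differentiation''. Differentiation only handles the $x$-variable; in $\xi$ the seminorms involve the difference operators $\Delta_{\xi}^{\alpha}$, which obey no chain rule, so membership of a matrix square root (with $d_\xi$ unbounded over $\widehat{G}$) in the symbol class needs an actual argument --- for instance an integral or resolvent representation of the square root combined with the Leibniz rule of Proposition \ref{Leibnizrule} applied to inverses --- and cannot be dismissed as routine.
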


Finally, we present the following result about the $L^p$-boundedness of the subelliptic classes for the sub-Laplacian $\mathcal{L},$ see \cite[Section 6]{CR20}.
 \begin{theorem}\label{Lp1CardonaDelgadoRuzhansky2}
Let $G$ be a compact Lie group and let us denote by $Q$ the Hausdorff
dimension of $G$ associated to the control distance associated to the sub-Laplacian $\mathcal{L}=\mathcal{L}_X,$ where  $X= \{X_{1},\cdots,X_k\} $ is a system of vector fields satisfying the H\"ormander condition of order $\kappa$.  For  $0\leqslant \delta<\rho\leqslant 1,$  let us consider a continuous linear operator $A:C^\infty(G)\rightarrow\mathscr{D}'(G)$ with symbol  $\sigma\in {S}^{-m,\mathcal{L}}_{\rho,\delta}(G\times \widehat{G})$, $m\geqslant   0$. Then $A$ extends to a bounded operator on $L^p(G)$ provided that 
$$
    m\geqslant   m_p:= Q(1-\rho)\left|\frac{1}{p}-\frac{1}{2}\right|.
$$
\end{theorem}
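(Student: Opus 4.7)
The overall strategy is to mimic Fefferman's classical proof of $L^p$-boundedness for H\"ormander symbols, adapted to the sub-Riemannian geometry governed by the control distance of $\mathcal{L}$. By duality and the closure of the class ${S}^{-m,\mathcal{L}}_{\rho,\delta}(G\times\widehat{G})$ under the adjoint operation (first item of Theorem \ref{calculus}), it is enough to treat the case $2\leq p<\infty$, where $m_p=Q(1-\rho)(1/2-1/p)$. Then, using analytic interpolation between the $L^2$-boundedness of order-zero operators (third item of Theorem \ref{calculus}) and an endpoint bound of Hardy-type, it suffices to prove the endpoint assertion: if $m_\infty:=Q(1-\rho)/2$, then every $A$ with symbol in ${S}^{-m_\infty,\mathcal{L}}_{\rho,\delta}(G\times\widehat{G})$ extends boundedly from the subelliptic Hardy space $H^1_{\mathcal{L}}(G)$ into $L^1(G)$.

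For this endpoint I would use an atomic decomposition of $H^1_{\mathcal{L}}(G)$ adapted to the control balls $B_{\mathcal{L}}(x_0,r)$, whose volume scales like $r^Q$ at small scales by the ball-box theorem, and reduce the proof to the uniform estimate $\Vert Aa\Vert_{L^1(G)}\leq C$ over atoms $a$ supported in a ball $B=B_{\mathcal{L}}(x_0,r)$ with $\Vert a\Vert_{\infty}\leq |B|^{-1}$ and $\int_G a=0$. Introduce a Littlewood--Paley partition $1=\sum_{j\geq 0}\psi_j$ adapted to the spectrum of $\mathcal{L}$ and decompose $A=\sum_{j\geq 0}A_j$ with $A_j:=A\circ\psi_j(\mathcal{L})$; by the symbolic calculus of Theorem \ref{calculus}, the symbol of $A_j$ is essentially $\sigma(x,\xi)\psi_j(|\widehat{\mathcal{M}}(\xi)|^2)$, of order $-m_\infty$ but spectrally localised at subelliptic frequencies $\sim 2^j$. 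The $L^1$-bound of $Aa$ then splits in the standard way: a local part over a controlled dilate $B^{*}$ of $B$, handled by $L^2$-boundedness together with Cauchy--Schwarz, and a tail part over $G\setminus B^{*}$, handled by the cancellation $\int a=0$ together with size and regularity estimates on the right-convolution kernel $R_{A_j}(x,\cdot)$ of $A_j$; the required kernel bounds follow from sub-Gaussian heat-kernel estimates for $e^{-t\mathcal{L}}$ on $G$ (see Remark \ref{Remark} and Theorem \ref{Converse:HLT}) combined with the decay of $\psi_j$ on the physical side in terms of the control distance.

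The principal obstacle is expected to be the non-invariance of $A_j$: the convolution kernel $R_{A_j}(x,\cdot)$ depends on the base point $x\in G$, and obtaining uniform pointwise bounds of the form $|R_{A_j}(x,z)|\leq C\,2^{jQ}(1+2^j d_{\mathcal{L}}(z,e))^{-Q-\varepsilon}$, where $d_{\mathcal{L}}$ is the control distance, requires combining the subelliptic symbolic calculus of Theorem \ref{calculus} with small-time sub-Gaussian estimates and the Varopoulos bounds for $e^{-t\mathcal{L}}$. Once these uniform kernel estimates are in place, the Calder\'on--Zygmund argument closes, with the summation over $j$ converging by balancing the high-frequency cancellation against the low-frequency $L^2$-size estimate precisely at the critical exponent $m_\infty=Q(1-\rho)/2$; interpolating this endpoint with the $L^2\to L^2$ bound and then dualising yields the full range of $p$ in the statement.
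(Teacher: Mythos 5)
First, a point of order: the paper does not prove Theorem \ref{Lp1CardonaDelgadoRuzhansky2} at all. It is stated in the preliminaries and imported verbatim from \cite[Section 6]{CR20}; there is no in-paper argument to compare yours against. Your strategy is indeed the one used in that source (and in \cite{DR19} for the elliptic case): reduce by duality and the stability of the classes under adjoints (Theorem \ref{calculus}), combine the $L^2$-boundedness of order-zero operators with a Fefferman-type endpoint estimate at the critical order $Q(1-\rho)/2$, and close by analytic interpolation. One cosmetic inconsistency: you first reduce to $2\leq p<\infty$, for which the natural endpoint is $L^\infty\rightarrow \textnormal{BMO}$, but then prove an $H^1_{\mathcal{L}}\rightarrow L^1$ endpoint, which via interpolation with $L^2$ gives $1<p\leq 2$ and only then dualises to $p\geq 2$; the two reductions are redundant rather than wrong, but you should commit to one.

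The substantive issue is that your proposal is a programme, not a proof: everything that makes the theorem true is concentrated in the step you yourself flag as ``the principal obstacle,'' namely the uniform kernel estimates for the frequency-localised, non-invariant pieces $A_j$. Two things are missing there. First, the bound you write, $|R_{A_j}(x,z)|\leq C\,2^{jQ}(1+2^jd_{\mathcal{L}}(z,e))^{-Q-\varepsilon}$, does not reflect the parameter $\rho$: for a symbol of order $-Q(1-\rho)/2$ in the $(\rho,\delta)$-calculus, the kernel of the piece localised at subelliptic frequency $2^{j}$ concentrates at the spatial scale $2^{-j\rho}$ (not $2^{-j}$), and it is exactly this mismatch between the frequency scale and the spatial scale that produces the loss $Q(1-\rho)|1/p-1/2|$; with the bound as written the critical exponent would come out wrong. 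Second, deriving even the correct bounds from the subelliptic calculus of Theorem \ref{calculus} together with sub-Gaussian heat kernel estimates requires controlling sufficiently many difference operators applied to $\sigma(x,\xi)\psi_j$ and converting that decay into pointwise decay of $R_{A_j}(x,\cdot)$ in the control distance, uniformly in the base point $x$; this is the technical core of \cite[Section 6]{CR20} and is not supplied by Remark \ref{Remark} or Theorem \ref{Converse:HLT}, which only give equicontinuity and on-diagonal decay of the semigroup. Until those two points are carried out, the argument does not close.
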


\subsection{$L^p$-$L^q$-boundedness for Bessel potentials} Here we discuss  the sharpness of the Hardy-Littlewood-Sobolev inequality, in this case, formulated in terms of the $L^p$-$L^q$-boundedness of Bessel potentials. 

\begin{lemma}\label {Lemma:Bessel:potential:lplq}Let $1<p,q<\infty.$ Let $G$ be a compact Lie group of the Hausdorff dimension $Q$ associated to the control distance associated to the sub-Laplacian $\mathcal{L}=-\sum_{1\leq i\leq k}X_i^2.$ Then, the Bessel operator $B_{a}=(1+\mathcal{L})^{-\frac{a}{2}},$ admits a bounded extension from $L^p(G)$ into $L^q(G),$ that is, the estimate
\begin{equation}\label{eqref:lplq}
    \Vert B_{a} f\Vert_{L^q}\leq  C\Vert  f\Vert_{L^p}\,\,\,
\end{equation} holds, if and only if, $1<p<q<\infty$ and
\begin{equation}\label{Necessary:condition:2}
   a\geq Q\left(\frac{1}{p}- \frac{1}{q}\right).
\end{equation}    
\end{lemma}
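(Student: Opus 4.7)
The plan is to decouple sufficiency and necessity, using the heat semigroup $T_t:=e^{-t(1+\mathcal{L})}$ as the central object in both directions.

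For the sufficiency (assuming $1<p<q<\infty$ and $a\geq Q(1/p-1/q)$), I would first record the ultracontractive bound
$$\|e^{-t\mathcal{L}}\|_{L^1(G)\to L^\infty(G)}\leq C t^{-Q/2},\quad 0<t\leq 1,$$
which is classical in the sub-Riemannian setting (Nagel--Stein--Wainger volume doubling plus the standard Davies--Gaffney / Varopoulos argument, with $Q$ the Hausdorff dimension attached to the control distance of $\mathcal{L}$). Multiplying by $e^{-t}$ and using $L^\infty$-contractivity yields $\|T_t\|_{L^1\to L^\infty}\leq Ce^{-t}t^{-Q/2}$ for all $t>0$. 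Since $T_t$ is a symmetric submarkovian semigroup (cf.\ Remark \ref{Remark}), Riesz--Thorin interpolation against the $L^p$-contractions gives
$$\|T_t\|_{L^p\to L^q}\leq Ce^{-t}t^{-\frac{Q}{2}\left(\frac{1}{p}-\frac{1}{q}\right)},\quad t>0.$$
The subordination identity
$$B_a=(1+\mathcal{L})^{-a/2}=\frac{1}{\Gamma(a/2)}\int_0^\infty t^{a/2-1}T_t\,dt$$
combined with Minkowski's inequality then gives $\|B_a f\|_{L^q}\leq C\|f\|_{L^p}$ whenever the $t$-integral of $t^{a/2-1}e^{-t}t^{-(Q/2)(1/p-1/q)}$ converges, which happens precisely when $a>Q(1/p-1/q)$. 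For $a\geq Q(1/p-1/q)$ with strict inequality allowed to fail only at the endpoint, I would write $B_a=B_{a-Q(1/p-1/q)}\circ B_{Q(1/p-1/q)}$: the first factor is $L^q$-bounded by the preceding step (including the $a=0$ case which is trivial), so it reduces the question to the endpoint.

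The endpoint $a=Q(1/p-1/q)$ is the genuine Hardy--Littlewood--Sobolev inequality for the sub-Laplacian. I would handle it by Varopoulos' theorem: the pointwise heat kernel bound $|p_t(x,y)|\leq Ce^{-t}t^{-Q/2}$ implies the endpoint Sobolev embedding $\|(1+\mathcal{L})^{-a/2}f\|_{L^q}\leq C\|f\|_{L^p}$ with $a=Q(1/p-1/q)$ for $1<p<q<\infty$, via a weak-type $(p,q)$ bound on the kernel $\int_0^\infty t^{a/2-1}e^{-t}p_t(x,y)\,dt$ (which has size $\sim d(x,y)^{a-Q}$ for small $d(x,y)$) and real interpolation. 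This is the part that I expect to be the technical heart of the argument, as it is where Minkowski alone fails.

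For the necessity, the argument is clean and short via Theorem \ref{Converse:HLT}. Suppose the $L^p$-$L^q$ bound \eqref{eqref:lplq} holds, equivalently, $\|f\|_{L^q}\leq C\|(1+\mathcal{L})^{a/2}f\|_{L^p}$. Applying Theorem \ref{Converse:HLT} to the symmetric submarkovian semigroup $T_t=e^{-t(1+\mathcal{L})}$ with $A=1+\mathcal{L}$ and $\alpha=a$ (which requires $a>0$, hence in particular $p\neq q$, and produces a parameter $Q^*$ via $a=Q^*(1/p-1/q)$ with $p<q$) yields
$$\|T_t\|_{L^1\to L^\infty}\leq Ct^{-Q^*/2}.$$
On the other hand, the sharp on-diagonal lower bound for the sub-Laplacian heat kernel, $p_t(x,x)\gtrsim t^{-Q/2}$ as $t\to 0^+$, which is a consequence of the volume estimate $|B_{\sqrt{t}}(x)|\sim t^{Q/2}$ associated with the control distance, forces $Q^*\geq Q$. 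Therefore $a=Q^*(1/p-1/q)\geq Q(1/p-1/q)$, and the requirement $1<p<q<\infty$ is automatic from the application of Theorem \ref{Converse:HLT}. The main obstacle in this direction is verifying that the heat kernel lower bound is available in the compact sub-Riemannian setting; this is a standard fact but needs to be cited carefully.
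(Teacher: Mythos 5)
Your proposal is correct and follows the same overall skeleton as the paper's proof: sufficiency via the subelliptic Hardy--Littlewood--Sobolev inequality, and necessity via the converse ultracontractivity result (Theorem \ref{Converse:HLT}) combined with the sharpness of the on-diagonal heat kernel decay $t^{-Q/2}$. Two differences in execution are worth recording. For sufficiency the paper simply cites the Hardy--Littlewood--Sobolev inequality from \cite{Saloff-CosteBook}, whereas you reprove it (subordination plus ultracontractivity for $a>Q(1/p-1/q)$, and a Varopoulos-type weak-type argument at the endpoint); this is sound but is extra work the paper does not undertake. For necessity the paper first converts the hypothesis on $B_a=(1+\mathcal{L})^{-a/2}$ into the homogeneous inequality $\Vert f\Vert_{L^q}\leq C\Vert \mathcal{L}^{a/2}f\Vert_{L^p}$ by writing $\mathcal{L}^{-a/2}=TB_a$ with $T=\mathcal{L}^{-a/2}(1+\mathcal{L})^{a/2}$ a zero-order operator in the subelliptic calculus of \cite{CR20}, and only then applies Theorem \ref{Converse:HLT} to the Markovian semigroup $e^{-t\mathcal{L}}$; you instead apply the theorem directly to the submarkovian semigroup $e^{-t(1+\mathcal{L})}$, which bypasses the pseudo-differential reduction at the cost of a harmless factor $e^{t}$ when passing to small times, where the lower bound $p_t(x,x)\gtrsim t^{-Q/2}$ is read off. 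Both routes are valid, and yours is more self-contained on the necessity side. One small caveat: your claim that ``$1<p<q<\infty$ is automatic from the application of Theorem \ref{Converse:HLT}'' is not quite an argument, since that theorem assumes $p<q$ rather than deriving it; the paper's proof has the same gap, so this is not a defect you introduced.
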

\begin{proof} The sufficiency of the condition \eqref{Necessary:condition:2} on $a$ for the $L^p$-$L^q$-boundedness of $B_a$ is exactly the Hardy-Littlewood-Sobolev inequality, see e.g. \cite{Saloff-CosteBook}. On the other hand, assume that $$B_a:L^p(G)\rightarrow L^q(G)$$ is bounded. 

Using the subelliptic functional calculus in \cite[Section 8]{CR20}, we have that $  \sqrt{\mathcal{L}}\in \Psi^{1,\mathcal{L}}_{1,0}(G\times \widehat{G}) $  is a pseudo-differential operator of first order. 
Since $\sqrt{\mathcal{L}}$ is not invertible, let $\sqrt{\mathcal{L}}^{-1}$ be the inverse of $\sqrt{\mathcal{L}}$ on the orthogonal complement of its kernel, i.e. if $P_0$ is the $L^2$-orthogonal projection on $\textnormal{Ker}(\sqrt{\mathcal{L}}),$ then
$$ \sqrt{\mathcal{L}}^{-1}\sqrt{\mathcal{L}}=\sqrt{\mathcal{L}}\sqrt{\mathcal{L}}^{-1}=I-P_0,\,\,\forall f\in \textnormal{Ker}(\sqrt{\mathcal{L}}),\, \sqrt{\mathcal{L}}^{-1}f:=0.  $$
This operator agrees with the operator $f(\mathcal{L}),$ defined by the spectral calculus where $f(t)=t^{-1},$ and that $\sqrt{\mathcal{L}}^{-1}\in \Psi^{-1,\mathcal{L}}_{1,0}(G\times \widehat{G}) $  is a consequence of the functional calculus in \cite[Section 8]{CR20}. 

Let us consider the operator $\mathcal{L}^{-\frac{a}{2}}:=(\sqrt{\mathcal{L}}_{G}^{-1})^a\in \Psi^{-a,\mathcal{L}}_{1,0}(G\times \widehat{G}) $ defined by the spectral calculus if $a>0$. In the case where $a<0,$ $\mathcal{L}^{-\frac{a}{2}}\in \Psi^{|a|,\mathcal{L}}_{1,0}(G\times \widehat{G}) .$ In view of the inclusion of the powers $\mathcal{L}^{-a}$ to the subelliptic calculus, note that $$\mathcal{L}^{-\frac{a}{2}}=\mathcal{L}^{-\frac{a}{2}}B_{-a}B_{a}=TB_a, $$ where
$$ T=\mathcal{L}^{-\frac{a}{2}}B_{-a}=\mathcal{L}^{-\frac{a}{2}} (1+\mathcal{L})^{\frac{a}{2}} \in \Psi^{0}_{1,0}(G\times \widehat{G}) $$
is a subelliptic pseudo-differential operator of order zero (see \cite[Theorem 8.20]{CR20}). Then $T:L^q(G)\rightarrow L^q(G)$ is bounded  and from the $L^p$-$L^q$ boundedness of $B_a$ we deduce the $L^p$-$L^q$-boundedness of $\mathcal{L}^{-\frac{a}{2}}=TB_a.$ Note that we have the validity of the estimate
\begin{equation}
    \Vert \mathcal{L}^{-\frac{a}{2}} f\Vert_{L^q}\leq  C\Vert  f\Vert_{L^p}.
\end{equation}In other words, we have that the inequality 
\begin{equation} \label{eq2.21}
    \Vert  f\Vert_{L^q}\leq  C\Vert \mathcal{L}^{\frac{a}{2}}  f\Vert_{L^p},
\end{equation} is valid with $C>0,$ independent of $f\in L^q(G).$ Define the semigroup
$$T_t=e^{-t\mathcal{L}}, t>0,$$ and consider the heat kernel $h_t$ defined by $T_tf=f\ast p_t.$ Note that (see \cite[Lemma VIII.2.5, Page 110]{Saloff-CosteBook}) 
$$\sup_{t>0}\Vert h_t\Vert_{L^1}\lesssim 1.$$ In consequence
we have that
\begin{equation}\label{Equi:l1}
  \sup_{t>0}  \Vert T_t f\Vert_{L^1(G)}=\sup_{t>0}\Vert f\ast h_t\Vert_{L^1(G)}\leq \sup_{t>0} \Vert h_t\Vert_{L^1(G)}\Vert f\Vert_{L^1(G)}\lesssim \Vert f\Vert_{L^1(G)}.
\end{equation}In a similar way
\begin{equation}\label{Equi:inf}
  \sup_{t>0}  \Vert T_t f\Vert_{L^\infty(G)}=\sup_{t>0}\Vert f\ast h_t\Vert_{L^\infty(G)}\leq \sup_{t>0} \Vert h_t\Vert_{L^1(G)}\Vert f\Vert_{L^\infty(G)}\lesssim \Vert f\Vert_{L^\infty(G)}.
\end{equation}In view of \eqref{Equi:l1} and \eqref{Equi:inf} we have that the semigroup $T_t$ is equicontinuous on $L^1$ and on $L^\infty.$ Moreover, in view of Remark \ref{Remark}, with $X=\{X_1,X_2,\cdots,X_k\},$ the semigroup $T_{t}=e^{\Delta_X}=e^{-t\mathcal{L}}$ is a submarkovian semigroup. Then, in view of \eqref{eq2.21}, by applying Theorem \ref{Converse:HLT}, with $\tilde{Q}$ defined by
\begin{equation}\label{a:tilde:Q}
    a=\tilde{Q}\left(\frac{1}{p}-\frac{1}{q}\right),
\end{equation}we have the estimate
\begin{equation}\label{semigroup}
    \Vert e^{-t\mathcal{L}} \Vert_{L^1\rightarrow L^\infty}\leq C_{\tilde{Q}}t^{-\tilde{Q}/2}.
\end{equation}Note that if
$$Q':=\inf\{\tilde{Q}: T_t \textnormal{ satisfies } \eqref{semigroup}\textnormal{ for all }t: 0<t<1 \},$$ 
then $Q'\leq \tilde{Q},$
$$  \Vert e^{-t\mathcal{L}} \Vert_{L^1\rightarrow L^\infty}\leq  C_{{Q}'}t^{-{Q}'/2}\leq   C_{\tilde{Q}}t^{-\tilde{Q}/2}, \,0<t<1. $$ However, by the sharpness of the heat kernel estimates is very well known that the infimum $Q'$ agrees with the Hausdorff dimension $Q$ of the group associated to the control distance associated to the sub-Laplacian $\mathcal{L}$ (see \cite[Chapter VIII]{Saloff-CosteBook}), that is $Q'=Q.$ Since $\tilde{Q}\geq Q,$ in view of \eqref{a:tilde:Q} we have proved that
\begin{equation}
    a\geq Q\left(\frac{1}{p}-\frac{1}{q}\right),
\end{equation} as desired.
\end{proof}

\section{$L^p$-$L^q$-boundedness of pseudo-differential operators}\label{Section:lp:lq}

\subsection{$L^p$-$L^q$-boundedness of pseudo-differential operators I}
The following result presents the necessary and sufficient criteria for a pseudo-differential operator to be bounded from $L^p(G)$ into $L^q(G)$ for the range $1<p\leq 2 \leq q<\infty.$

\begin{theorem} \label{thh1}
    
Let $1<p\leq 2\leq q<\infty$ and $m \in \mathbb{R}.$  Let $G$ be a compact Lie group, and let $Q$ be its Hausdorff dimension with respect to the control distance associated to a H\"ormander sub-Laplacian $\mathcal{L}.$ Let $0\leq \delta< \rho\leq 1.$  Then, every pseudo-differential operator $A\in \Psi^{m,\mathcal{L}}_{\rho,\delta}(G\times \widehat{G})$ with $0\leq \delta <  \rho \leq 1$  admits a bounded extension from $L^p(G)$ into $L^q(G),$ that is
\begin{equation}
   \forall f\in C^{\infty}_0(G),\, \Vert A f\Vert_{L^q}\leq  C\Vert  f\Vert_{L^p}\,\,\,
\end{equation} holds, if and only if, 
\begin{equation}\label{Necessary:condition:3a}
   m\leq -Q\left(\frac{1}{p}- \frac{1}{q}\right).
\end{equation}    
\end{theorem}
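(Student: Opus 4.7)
The plan is to prove both directions by reducing to the Hardy--Littlewood--Sobolev inequality for Bessel potentials (Lemma~\ref{Lemma:Bessel:potential:lplq}) together with the subelliptic calculus (Theorem~\ref{calculus}). The key observation is that the range $1<p\leq 2\leq q<\infty$ lets us factor through $L^2$ via two positive-order Bessel smoothings, which is exactly the situation in which the HLS inequality is the sharp tool.

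\textbf{Sufficiency.} Assume $m\leq -Q(1/p-1/q)$. I would split the required smoothing into
\[
c_1:=Q\!\left(\tfrac{1}{p}-\tfrac{1}{2}\right)\geq 0,\qquad c_2:=Q\!\left(\tfrac{1}{2}-\tfrac{1}{q}\right)\geq 0,
\]
so that $c_1+c_2=Q(1/p-1/q)\leq -m$. Define
\[
S:=(1+\mathcal{L})^{c_2/2}\circ A\circ (1+\mathcal{L})^{c_1/2}.
\]
Since the subelliptic functional calculus gives $(1+\mathcal{L})^{c/2}\in \Psi^{c,\mathcal{L}}_{1,0}(G\times\widehat{G})\subset \Psi^{c,\mathcal{L}}_{\rho,\delta}(G\times\widehat{G})$, Theorem~\ref{calculus} yields
\[
S\in \Psi^{m+c_1+c_2,\mathcal{L}}_{\rho,\delta}(G\times\widehat{G}),
\]
which is of order $\leq 0$. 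Hence, again by Theorem~\ref{calculus}, $S:L^2(G)\to L^2(G)$ is bounded. Writing $A=(1+\mathcal{L})^{-c_2/2}\circ S\circ (1+\mathcal{L})^{-c_1/2}$ and applying Lemma~\ref{Lemma:Bessel:potential:lplq} to the two Bessel factors ($B_{c_1}:L^p\to L^2$ and $B_{c_2}:L^2\to L^q$), the composition gives the desired $L^p\to L^q$ bound.

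\textbf{Necessity.} I would produce a single explicit counterexample, namely
\[
A_0:=(1+\mathcal{L})^{m/2}=B_{-m}.
\]
By the subelliptic functional calculus in \cite{CR20}, $A_0\in \Psi^{m,\mathcal{L}}_{1,0}(G\times\widehat{G})\subset \Psi^{m,\mathcal{L}}_{\rho,\delta}(G\times\widehat{G})$, so it is a legitimate member of our class. If every $A\in \Psi^{m,\mathcal{L}}_{\rho,\delta}(G\times\widehat{G})$ extends boundedly from $L^p(G)$ into $L^q(G)$, then in particular $A_0=B_{-m}$ does; Lemma~\ref{Lemma:Bessel:potential:lplq} then forces $-m\geq Q(1/p-1/q)$, i.e.\ \eqref{Necessary:condition:3a}.

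\textbf{Where the real work sits.} Essentially none of this is new analysis at the level of the theorem itself: sufficiency is a three-line factorisation once Lemma~\ref{Lemma:Bessel:potential:lplq} and Theorem~\ref{calculus} are in place, and necessity is a direct appeal to the same lemma. The genuine obstacle was packaged into Lemma~\ref{Lemma:Bessel:potential:lplq}, whose nontrivial ``only if'' direction in turn rested on identifying the sharp exponent with the Hausdorff dimension $Q$ via the semigroup estimate of Theorem~\ref{Converse:HLT} applied to the sub-Markovian heat semigroup $e^{-t\mathcal{L}}$. Consequently, the two points requiring care in writing the proof are (i) checking that the composition $S=(1+\mathcal{L})^{c_2/2}\circ A\circ (1+\mathcal{L})^{c_1/2}$ really lands in the class of order $m+c_1+c_2\leq 0$ (which is precisely the content of Theorem~\ref{calculus} in the range $0\leq \delta<\rho\leq 1$), and (ii) noting the inclusion $\Psi^{m,\mathcal{L}}_{1,0}\subset \Psi^{m,\mathcal{L}}_{\rho,\delta}$ so that the counterexample $A_0$ lies in the target class.
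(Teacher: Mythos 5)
Your proposal is correct and follows essentially the same route as the paper: the necessity is obtained from the sharp $L^p$-$L^q$ (un)boundedness of the Bessel potential $B_{-m}=(1+\mathcal{L})^{m/2}$ via Lemma \ref{Lemma:Bessel:potential:lplq}, and the sufficiency by sandwiching an order-$\leq 0$ operator (hence $L^2$-bounded by Theorem \ref{calculus}) between the two Bessel factors $B_{c_1}:L^p\to L^2$ and $B_{c_2}:L^2\to L^q$. The only cosmetic difference is that you fix $c_1=Q(1/p-1/2)$ and $c_2=Q(1/2-1/q)$ exactly, whereas the paper allows any splitting $m=m_1+m_2$ with $m_1\leq -Q(1/p-1/2)$ and $m_2\leq -Q(1/2-1/q)$; this changes nothing of substance.
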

\begin{proof}
    Assume that $m> -Q\left(\frac{1}{p}- \frac{1}{q}\right).$ We are going to show that there exists $A\in \Psi^{m, \mathcal{L}}_{\rho,\delta}(G\times \widehat{G})$ which is not bounded from $L^p(G)$ into $L^q(G).$ We consider $$A=B_{-m}=(1+\mathcal{L})^{\frac{m}{2}}\in \Psi^{m,\mathcal{L}}_{1,0}(G\times \widehat{G})\subset \Psi^{m,\mathcal{L}}_{\rho,\delta}(G\times \widehat{G}).$$ Since $$-m< Q\left(\frac{1}{p}- \frac{1}{q}\right),$$ from Lemma \ref{Lemma:Bessel:potential:lplq}, we have that  $A=B_{-m}$ is not bounded from $L^p(G)$ into $L^q(G).$ So, we have proved the necessity of the order condition  \eqref{Necessary:condition:3a}. Now, in order to prove the reverse statement, we consider $m$ satisfying \eqref{Necessary:condition:3a} and  $m_1$ and $m_2$ satisfying the conditions
    \begin{equation}
     m=m_1+m_2,\,   m_1\leq -Q(1/p-1/2),\,m_2\leq -Q(1/2-1/q).
    \end{equation} If  $A\in \Psi^{m,\mathcal{L}}_{\rho,\delta}(G\times \widehat{G}),$ we factorise $A$ as follows,
    \begin{align*}
        A=B_{-m_2}A_0 B_{-m_1},\,\,A_0=B_{m_2}AB_{m_1}.
    \end{align*}Note that $A_0\in \Psi^{0,\mathcal{L}}_{\rho,\delta}(G\times \widehat{G}).$ The Calder\'on-Vaillancourt theorem (Theorem \ref{calculus}(iii)) implies that $A_0$ is bounded from $L^2(G)$ into $L^2(G).$ On the other hand, from Lemma \ref{Lemma:Bessel:potential:lplq} we have that $B_{m_2}:L^2(G)\rightarrow L^q(G),$ and $B_{m_1}:L^p(G)\rightarrow L^2(G),$ are bounded operators. In consequence, we have proved that $A$ admits a bounded extension from $L^p(G)$ into $L^q(G).$ The proof is complete.    
\end{proof}

\subsection{$L^p$-$L^q$-boundedness of pseudo-differential operators II}
In this subsection, we consider the $L^p-L^q$ boundedness of pseudo-differential operators on compact Lie groups for a wider range of indices $p$ and $q.$ Our main result of this section is the following theorem. 
\begin{theorem} \label{thh2} Let $1<p \leq q <\infty, m \in \mathbb{R},$ and let $0\leq \delta< \rho\leq 1.$  Let $G$ be a compact Lie group, and let $Q$ be its Hausdorff dimension with respect to the control distance associated to a H\"ormander sub-Laplacian $\mathcal{L}.$   Then, every pseudo-differential operator $A\in \Psi^{m,\mathcal{L}}_{\rho,\delta}(G\times \widehat{G})$ admits a bounded extension from $L^p(G)$ into $L^q(G),$ that is
\begin{equation}
   \forall f\in C^{\infty}_0(G),\, \Vert A f\Vert_{L^q}\leq  C\Vert  f\Vert_{L^p}\,\,\,
\end{equation} holds in the following cases: 
\begin{itemize}
        \item[(i)] if $1<p\leq q \leq 2$ and  \begin{equation}\label{Necessary:condition:4}
   m\leq -Q \left( \frac{1}{p}-\frac{1}{q}+(1-\rho) \left(\frac{1}{q}-\frac{1}{2}\right)\right). \end{equation}    
\item[(ii)] if $2 \leq p \leq q<\infty$ and 

\begin{equation}\label{Necessary:condition:3}
   m\leq -Q\left( \frac{1}{p}-\frac{1}{q}+(1-\rho) \left(\frac{1}{2}-\frac{1}{p}\right)\right);
   \end{equation} 
\end{itemize}
\end{theorem}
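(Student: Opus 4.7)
The plan is to reduce both cases of Theorem \ref{thh2} to combining the Bessel-potential Hardy--Littlewood--Sobolev inequality (Lemma \ref{Lemma:Bessel:potential:lplq}) with the subelliptic $L^p$-boundedness theorem (Theorem \ref{Lp1CardonaDelgadoRuzhansky2}) via a factorisation of the operator $A$. The guiding principle is that the $L^r$-boundedness result for the class $\Psi^{-m,\mathcal{L}}_{\rho,\delta}(G\times\widehat{G})$ only costs $Q(1-\rho)|1/r-1/2|$ in the order, so we should route through whichever of the two exponents $p,q$ is closer to $2$. Under the hypotheses of case (i) that exponent is $q$ (since $p\leq q\leq 2$), while under (ii) it is $p$ (since $2\leq p\leq q$); this is what produces the asymmetric expressions in \eqref{Necessary:condition:4} and \eqref{Necessary:condition:3}.

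For case (i), I would set $a:=Q(1/p-1/q)\geq 0$ and write
\begin{equation*}
A = T\,B_{-a},\qquad T := A\,B_{a} = A\,(1+\mathcal{L})^{a/2}.
\end{equation*}
Since $B_{a}\in\Psi^{a,\mathcal{L}}_{1,0}(G\times\widehat{G})$ by the functional calculus of \cite[Section 8]{CR20}, the composition property in Theorem \ref{calculus} yields $T\in\Psi^{m+a,\mathcal{L}}_{\rho,\delta}(G\times\widehat{G})$. By Lemma \ref{Lemma:Bessel:potential:lplq}, $B_{-a}\colon L^{p}(G)\to L^{q}(G)$ is bounded (with the strict inequality $p<q$; the case $p=q$ is the degenerate $a=0$ and follows directly from Theorem \ref{Lp1CardonaDelgadoRuzhansky2}). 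By Theorem \ref{Lp1CardonaDelgadoRuzhansky2}, $T\colon L^{q}(G)\to L^{q}(G)$ is bounded provided
\begin{equation*}
-(m+a)\geq Q(1-\rho)\Bigl|\frac{1}{q}-\frac{1}{2}\Bigr|=Q(1-\rho)\Bigl(\frac{1}{q}-\frac{1}{2}\Bigr)\quad\text{(since }q\leq 2\text{)}.
\end{equation*}
Rearranging gives exactly the bound \eqref{Necessary:condition:4}, so the composition $A=T\circ B_{-a}\colon L^{p}\to L^{q}\to L^{q}$ is bounded under the hypothesis.

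For case (ii), by the same logic but with $p$ and $q$ swapped in role, I would use the \emph{left} factorisation
\begin{equation*}
A = B_{-a}\,\widetilde{T},\qquad \widetilde{T} := B_{a}\,A = (1+\mathcal{L})^{a/2}\,A,
\end{equation*}
again with $a=Q(1/p-1/q)\geq 0$. The calculus gives $\widetilde{T}\in\Psi^{m+a,\mathcal{L}}_{\rho,\delta}(G\times\widehat{G})$, Lemma \ref{Lemma:Bessel:potential:lplq} gives $B_{-a}\colon L^{p}\to L^{q}$, and Theorem \ref{Lp1CardonaDelgadoRuzhansky2} now gives $\widetilde{T}\colon L^{p}\to L^{p}$ provided $-(m+a)\geq Q(1-\rho)(1/2-1/p)$ since $p\geq 2$, which is precisely \eqref{Necessary:condition:3}. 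The composition $A=B_{-a}\circ\widetilde{T}\colon L^{p}\to L^{p}\to L^{q}$ is then bounded.

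The main technical issue is a bookkeeping one rather than a deep obstacle: one must be sure that the operator $\widetilde{T}$ (respectively $T$) lies in $\Psi^{m+a,\mathcal{L}}_{\rho,\delta}$ rather than merely in some larger class, which is exactly the content of the composition statement in Theorem \ref{calculus} together with the functional calculus inclusion $(1+\mathcal{L})^{a/2}\in\Psi^{a,\mathcal{L}}_{1,0}\subset\Psi^{a,\mathcal{L}}_{\rho,\delta}$. One should also observe the consistency of this argument with Theorem \ref{thh1}: when $p\leq 2\leq q$, the right-hand side of \eqref{order:condition} reduces to $-Q(1/p-1/q)$, matching \eqref{Necessary:condition:3a}, and in that regime the present factorisation degenerates to the factorisation $A=B_{-m_{2}}A_{0}B_{-m_{1}}$ used in the proof of Theorem \ref{thh1}, where now $A_{0}\in\Psi^{0,\mathcal{L}}_{\rho,\delta}$ is handled by Calder\'on--Vaillancourt instead of the sharper Theorem \ref{Lp1CardonaDelgadoRuzhansky2}.
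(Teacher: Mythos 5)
Your argument is correct and essentially identical to the paper's proof: in both cases you factorise $A$ through the Bessel potential $(1+\mathcal{L})^{-\frac{Q}{2}(\frac{1}{p}-\frac{1}{q})}$ (Lemma \ref{Lemma:Bessel:potential:lplq}), routed through whichever of $p,q$ lies closer to $2$, and apply Theorem \ref{Lp1CardonaDelgadoRuzhansky2} to the remaining factor of order $m+Q(\frac{1}{p}-\frac{1}{q})$, exactly as the paper does with $m'=-Q(\frac{1}{p}-\frac{1}{q})$. The only discrepancy is notational: you take $B_{s}=(1+\mathcal{L})^{s/2}$ while the paper's Lemma \ref{Lemma:Bessel:potential:lplq} defines $B_{s}=(1+\mathcal{L})^{-s/2}$, so your subscripts are the negatives of the paper's, but the operators and the estimates coincide.
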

\begin{proof}  
\begin{itemize}
    \item[(i)] Let us consider $p, q$ and $m$ satisfying the conditions given in (i). Choose $m'= -Q \left( \frac{1}{p}-\frac{1}{q} \right)$ and this implies that the subelliptic Bessel potential $B_{-m'}$ is bounded from $L^p(G)$ to $L^q(G)$ as a consequence of Lemma \ref{Lemma:Bessel:potential:lplq}. For $A \in \Psi^{m, \mathcal{L}}_{\rho, \delta}(G \times \widehat{G}),$ we decompose it as follows:
    $$A= (A B_{m'}) B_{-m'}.$$
    Now, we note that operator $AB_{m'} \in \Psi^{m-m',\mathcal{L}}_{\rho, \delta}(G \times \widehat{G})$ with $m-m'$ satisfying $m-m' \leq -Q(1-\rho) \left( \frac{1}{q}-\frac{1}{2} \right).$ Then, Theorem \ref{Lp1CardonaDelgadoRuzhansky2} shows that $AB_{m'}$ is bounded operator from $L^q(G)$ into $L^q(G).$ Therefore, we conclude that the operator $A$ has a bounded extension from $L^p(G)$ into $L^q(G).$
    \item[(ii)] To prove this part we follow the same strategy as in Part (i). We factorise the operator $A \in \Psi^{m,\mathcal{L}}_{\rho, \delta}(G \times \widehat{G})$  in the following manner:
    $$A= B_{-m'} (B_{m'} A),$$
    where $m'=-Q(\frac{1}{p}-\frac{1}{q}).$ Again, it follows from Lemma \ref{Lemma:Bessel:potential:lplq} that the operator $B_{-m'}$ is a bounded from $L^{p}(G)$ into $L^q(G).$ On the other hand, the operator $B_{m'} A \in \Psi^{m-m',\mathcal{L}}_{\rho, \delta}(G \times \widehat{G})$ with $m-m'\leq -Q(1-\rho) \left(\frac{1}{2}-\frac{1}{p} \right),$ which, as a consequence of Theorem \ref{Lp1CardonaDelgadoRuzhansky2}, yields that the operator $B_{-m'} A$ is bounded from $L^p(G)$ into $L^p(G).$ Hence, we conclude that the operator $A$ has a bounded extension from $L^p(G)$ into $L^q(G).$ 
\end{itemize}This completes the proof of this theorem.  \end{proof}

\section{$L^p$-$L^q$-boundedness of pseudo-differential operators on $\mathbb{S}^3$ and on $\textnormal{SU}(3)$}\label{ex}

We will present an explicit form of our main Theorem \ref{main:theorem} on the sphere $\textnormal{SU}(2)\cong \mathbb{S}^3$ and on $\textnormal{SU}(3).$ By abuse of notation, we will use the same symbol to denote an element of the Lie algebra and the vector field on the group  obtained by left translation.

\subsection{$L^p$-$L^q$-boundedness of pseudo-differential operators on $\mathbb{S}^3$}
Let us consider the left-invariant first-order  differential operators $\partial_{+},\partial_{-},\partial_{0}: C^{\infty}(\textnormal{SU}(2))\rightarrow C^{\infty}(\textnormal{SU}(2)),$ called creation, annihilation, and neutral operators respectively, (see Definition 11.5.10 of \cite{Ruz}) and let us define 
\[ 
    X_{1}=-\frac{i}{2}(\partial_{-}+\partial_{+}),\, X_{2}=\frac{1}{2}(\partial_{-}-\partial_{+}),\, X_{3}=-i\partial_0,
\]where $X_{3}=[X_1,X_2],$ based on the commutation relations $[\partial_{0},\partial_{+}]=\partial_{+},$ $[\partial_{-},\partial_{0}]=\partial_{-},$ and $[\partial_{+},\partial_{-}]=2\partial_{0}.$ The system $X=\{X_1,X_2\}$ satisfies the H\"ormander condition of step $\kappa=2,$ and the Hausdorff dimension defined by the control distance associated to the sub-Laplacian   $\mathcal{L}_1=-X_1^2-X_2^2$  is $Q=4.$ In a similar way, we can define the sub-Laplacian $\mathcal{L}_2=-X_2^2-X_3^2$ associated to the system of vector fields $X'=\{X_2,X_3\},$ which also satisfies the H\"ormander condition of step $\kappa=2.$ In the following Corollary we describe the $L^p$-$L^q$-boundedness of subelliptic pseudo-differential operators on $\textnormal{SU}(2)\cong \mathbb{S}^3.$ In this case we observe that one can identify $\widehat{\textnormal{SU}}(2)\cong \frac{1}{2}\mathbb{N}_0, $ see \cite{Ruz} for details.

\begin{corollary}\label{main:theorem:su:2} 
Let $1<p, q<\infty,$ and   $0\leq \delta< \rho\leq 1.$ Let us consider the H\"ormander sub-Laplacian $\mathcal{L}=-X_1^2-X_2^2.$  Then, the following statements hold.
\begin{itemize}
    \item Let $1<p\leq2\leq  q<\infty.$ Every pseudo-differential operator $$A\in \Psi^{m,\mathcal{L}}_{\rho,\delta}(\textnormal{SU}(2)\times \frac{1}{2}\mathbb{N}_0)$$ admits a bounded extension from $L^p(\textnormal{SU}(2))$ into $L^q(\textnormal{SU}(2)),$ that is
\begin{equation}\label{Lp-Lq:bound:su:2}
   \forall f\in C^{\infty}(\textnormal{SU}(2)),\, \Vert A f\Vert_{L^q(\textnormal{SU}(2))}\leq  C\Vert  f\Vert_{L^p(\textnormal{SU}(2))}\,\,\,
\end{equation} holds, if and only if, 
\begin{equation}\label{Necessary:condition:3:intro:su:2}
   m\leq -4\left(\frac{1}{p}- \frac{1}{q}\right).
\end{equation}  
\item Every pseudo-differential operator $A\in \Psi^{m,\mathcal{L}}_{\rho,\delta}(\textnormal{SU}(2)\times \frac{1}{2}\mathbb{N}_0)$ admits a bounded extension from $L^p(\textnormal{SU}(2))$ into $L^q(\textnormal{SU}(2)),$ that is \eqref{Lp-Lq:bound:su:2} holds, in the following cases: 
\begin{itemize}
        \item[(i)] if $1<p\leq q \leq 2$ and  \begin{equation}\label{Necessary:condition:4:intro:su:2}
   m\leq -4 \left( \frac{1}{p}-\frac{1}{q}+(1-\rho) \left(\frac{1}{q}-\frac{1}{2}\right)\right). \end{equation}    
\item[(ii)] if $2 \leq p \leq q<\infty$ and 

\begin{equation}\label{Necessary:condition:3:intro:su:2:2}
   m\leq -4\left( \frac{1}{p}-\frac{1}{q}+(1-\rho) \left(\frac{1}{2}-\frac{1}{p}\right)\right).
   \end{equation}
\end{itemize}
\end{itemize}
\end{corollary}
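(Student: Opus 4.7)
The plan is to derive the corollary as a direct specialisation of Theorem \ref{main:theorem} to the case $G=\textnormal{SU}(2)$, for which the entire content consists of computing the Hausdorff dimension $Q$ associated with the sub-Laplacian $\mathcal{L}=-X_1^2-X_2^2$ and invoking the identification $\widehat{\textnormal{SU}}(2)\cong \tfrac{1}{2}\mathbb{N}_0$ to present the statement in an intrinsic form on this group.

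First, I would verify that the system $X=\{X_1,X_2\}$ satisfies H\"ormander's condition. Using the commutation relations $[\partial_0,\partial_+]=\partial_+$, $[\partial_-,\partial_0]=\partial_-$ and $[\partial_+,\partial_-]=2\partial_0$, a direct calculation shows that $[X_1,X_2]=X_3$, and since $\{X_1,X_2,X_3\}$ spans $\mathfrak{su}(2)$, the system $X$ is H\"ormander of step $\kappa=2$. Consequently $\mathcal{L}=-X_1^2-X_2^2$ is a genuine H\"ormander sub-Laplacian and Theorem \ref{main:theorem} applies.

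Next, I would compute the Hausdorff dimension $Q$ from formula \eqref{Hausdorff-dimension}. Since $H^1_x G = \textnormal{span}\{X_1,X_2\}$ has dimension $2$ and $H^2_x G = \textnormal{span}\{X_1,X_2,[X_1,X_2]\}=T_x G$ has dimension $3$, the formula gives
\begin{equation*}
Q=\dim(H^1 G)+\sum_{i=1}^{\kappa-1}(i+1)\bigl(\dim H^{i+1}G-\dim H^{i}G\bigr)=2+2\cdot(3-2)=4.
\end{equation*}
This is precisely the value that appears in \eqref{Necessary:condition:3:intro:su:2}, \eqref{Necessary:condition:4:intro:su:2} and \eqref{Necessary:condition:3:intro:su:2:2}.

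Finally, with $G=\textnormal{SU}(2)$, $Q=4$ and the identification $\widehat{\textnormal{SU}}(2)\cong\tfrac{1}{2}\mathbb{N}_0$ (see \cite{Ruz}), each of the three cases of Corollary \ref{main:theorem:su:2} is a direct transcription of the corresponding part of Theorem \ref{main:theorem}: the biconditional for $1<p\leq 2\leq q<\infty$ follows from Theorem \ref{thh1}, while the two sufficient conditions in the cases $1<p\leq q\leq 2$ and $2\leq p\leq q<\infty$ follow from parts (i) and (ii) of Theorem \ref{thh2}. There is no substantive obstacle here: the only step that requires any verification beyond a direct quotation is the computation of $Q$, and that is immediate once H\"ormander's condition for $\{X_1,X_2\}$ has been checked.
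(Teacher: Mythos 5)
Your proposal is correct and follows exactly the route the paper intends: the corollary is obtained by specialising Theorem \ref{main:theorem} to $G=\textnormal{SU}(2)$ after checking that $\{X_1,X_2\}$ is a H\"ormander system of step $\kappa=2$ and computing $Q=\dim H^1G+2(\dim H^2G-\dim H^1G)=2+2(3-2)=4$ from \eqref{Hausdorff-dimension}. The paper merely asserts these facts in the text preceding the corollary, so your explicit verification is a faithful (and slightly more detailed) rendering of the same argument.
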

\subsection{$L^p$-$L^q$-boundedness of pseudo-differential operators on $\textnormal{SU}(3)$}
The special unitary group of $3\times 3$ complex matrices is defined by $$\textnormal{SU}(3)=\{g\in \textnormal{GL}(3,\mathbb{C}):gg^*=I_3\equiv(\delta_{ij})_{1\leqslant i,j\leqslant 3},\, \textnormal{\bf{det}}(g)=1\},$$
and its Lie algebra is given by
$$  \mathfrak{su}(3)=\{g\in \textnormal{GL}(3,\mathbb{C}):g+g^*=0,\, \textnormal{\bf{Tr}}(g)=0\}.$$ The inner product is defined by a multiple of the Killing form on $ \mathfrak{su}(3)$  given by $B(X,Y)=-\frac{1}{2}\textnormal{\bf{Tr}}[XY].$ The torus
$$ \mathbb{T}_{\textnormal{SU}(3)}=\{\textnormal{diag}[e^{i\theta_{1}},e^{i\theta_{2}},e^{i\theta_{3}}]:\theta_1+\theta_2+\theta_3=0,\,\theta_i\in \mathbb{R}\}  $$ is a maximal torus of $\textnormal{SU}(3),$ and its Lie algebra is given by
$$  \mathfrak{t}_{ \mathfrak{su}(3)}=\{\textnormal{diag}[i\theta_{1},i\theta_{2},i\theta_{3}]:\theta_1+\theta_2+\theta_3=0,\,\theta_i\in \mathbb{R}\}.  $$  The following vectors  
$$ T_{1}= \textnormal{diag}[-i,i,0],\quad T_{2}= \textnormal{diag}[-i/\sqrt{3},-i/\sqrt{3},2i/\sqrt{3}] $$ provide a basis for $ \mathfrak{t}_{ \mathfrak{su}(3)}.$ Completing this basis with the following vectors
\begin{align*}
  &X_{1}=  \begin{pmatrix}
0 & 1 &   0 \\
-1 & 0&  0 \\
0 & 0 & 0 
\end{pmatrix},\,
X_{2}=  \begin{pmatrix}
0 & i &   0 \\
i & 0&  0 \\
0 & 0 & 0 
\end{pmatrix},\,
\\
&X_{3}=  \begin{pmatrix}
0 & 0 &   0 \\
0 & 0&  1 \\
0 & -1 & 0 
\end{pmatrix},\,  X_{4}=  \begin{pmatrix}
0 & 0 &   0 \\
0 & 0&  -i \\
0 & -i & 0 
\end{pmatrix},\,
\\
    &X_{5}=  \begin{pmatrix}
0 & 0 &   1 \\
0 & 0&  0 \\
-1 & 0 & 0 
\end{pmatrix},\,
X_{6}=  \begin{pmatrix}
0 & 0 &   i \\
0 & 0&  0 \\
i & 0& 0 
\end{pmatrix},
\end{align*}
we obtain the  Gell-Mann system, which forms an orthonormal basis of $\mathfrak{su}(3).$ The system of vector fields $X=\{X_1,X_2,X_3,X_4,X_5,X_6\}$ satisfies the H\"ormander condition of step $\kappa=2,$ (see \cite[Section 11]{CR20}). Indeed, it can be deduced if we write
\[ 
  X_{7}=-[X_1,X_2]=  \begin{pmatrix}
-2i & 0 &   0 \\
0 & 2i&  0 \\
0 & 0 & 0 
\end{pmatrix},\,
\]
\[ 
    X_{8}=-[X_3,X_4]=  \begin{pmatrix}
0 & 0 &   0 \\
0 & 2i&  0 \\
0 & 0& -2i
\end{pmatrix}
\]
from T{\SMALL{ABLE}} \ref{table2}.
\begin{table}[h]  
\caption{Commutators in $\textnormal{SU}(3)$ } 
\centering 
\begin{tabular}{l c c rrrrrrr} 
\hline\hline   
       & \,\, $X_1$ & \,\, $X_2$  & \,\, $X_3$ & \,\, $X_4$ & \,\, $X_5$ & \,\, $X_6$ & \,\, $X_7$ &\,\, $X_8$    \\
$X_1$  &  0 & $-X_7$  & $X_5$   & $-X_6$ & $-X_3$  & $X_4$ & $4X_2$ & $2X_2$   \\

$X_2$  & $X_7$  &  0 & $X_6$ & $X_5$ & $-X_4$  & $-X_3$  & $-4X_1$ & $-2X_1$   \\

$X_3$  &  $-X_5$ &  $-X_6$ & 0 &  $-X_8$&  $X_1$& $X_2$ & $2X_4$  & $4X_4$   \\

$X_4$  & $X_6$ & $-X_5$  & $X_8$ & $0$ & $X_2$ & $-X_1$ &  $-2X_3$ & $-4X_3$   \\

$X_5$  & $X_3$ &  $X_4$ & $-X_1$ & $-X_2$ & 0  & $X_8-X_7$  & $2X_6$ & $-2X_6$   \\

$X_6$  & $-X_4$ & $X_3$  & $-X_2$  & $X_1$ &  $X_7-X_8$ & 0 & $-2X_5$ & $2X_5$    \\

$X_7$  & $-4X_2$ & $4X_1$  & $-2X_4$  & $2X_3$ & $-2X_6$ & $2X_5$ & 0 &  0 \\

$X_8$  & $2X_2$ &  $2X_1$ & $-4X_4$ & $4X_3$ & $2X_6 $  & $-2X_5$  & 0 & 0   \\[1ex]  
\hline 
\end{tabular}  
\label{table2}
\end{table}Observe that the Hausdorff dimension associated to the control distance associated to the sub-Laplacian
\[ 
    \mathcal{L}=-X_1^2-X_2^2-X_{3}^2-X_4^2-X_5^2-X_6^2,
\] can be computed from \eqref{Hausdorff-dimension} as follows.
\begin{align*}
    Q:&=\dim(H^1G)+ 2(\dim H^{2}G-\dim H^{1}G )
    = 6+2(8-6)=10.
\end{align*}

In the following Corollary we describe the $L^p$-$L^q$-boundedness of subelliptic pseudo-differential operators on $\textnormal{SU}(3).$ In this case we observe that one can identify $\widehat{\textnormal{SU}}(3)\cong \{D(p,q):p,q\in \mathbb{N}_0\}, $ where $D(p,q)$  in physical terms, $p$ is the number of quarks and $q$ is the number of antiquarks. The  construction of the unitary representations $D(p,q)$ can be found in \cite{Hall}. We keep in this case the standard notation  $\widehat{\textnormal{SU}}(3)$ by simplicity. 

\begin{corollary}\label{main:theorem:su:3} 
Let $1<p, q<\infty,$ and   $0\leq \delta< \rho\leq 1.$ Let us consider the H\"ormander sub-Laplacian $\mathcal{L}=-X_1^2-X_2^2-X_3^2-X_4^2-X_5^2-X_6^2.$  Then, the following statements hold.
\begin{itemize}
    \item Let $1<p\leq2\leq  q<\infty.$ Every pseudo-differential operator $$A\in \Psi^{m,\mathcal{L}}_{\rho,\delta}(\textnormal{SU}(3)\times \widehat{\textnormal{SU}}(3))$$ admits a bounded extension from $L^p(\textnormal{SU}(3))$ into $L^q(\textnormal{SU}(3)),$ that is
\begin{equation}\label{Lp-Lq:bound:su:3}
   \forall f\in C^{\infty}(\textnormal{SU}(3)),\, \Vert A f\Vert_{L^q(\textnormal{SU}(3))}\leq  C\Vert  f\Vert_{L^p(\textnormal{SU}(3))}\,\,\,
\end{equation} holds, if and only if, 
\begin{equation}\label{Necessary:condition:3:intro:su:3}
   m\leq -10\left(\frac{1}{p}- \frac{1}{q}\right).
\end{equation}  
\item Every pseudo-differential operator $A\in \Psi^{m,\mathcal{L}}_{\rho,\delta}(\textnormal{SU}(3)\times \widehat{\textnormal{SU}}(3))$ admits a bounded extension from $L^p(\textnormal{SU}(3))$ into $L^q(\textnormal{SU}(3)),$ that is \eqref{Lp-Lq:bound:su:3} holds, in the following cases: 
\begin{itemize}
        \item[(i)] if $1<p\leq q \leq 2$ and  \begin{equation}\label{Necessary:condition:4:intro:su:3}
   m\leq -10 \left( \frac{1}{p}-\frac{1}{q}+(1-\rho) \left(\frac{1}{q}-\frac{1}{2}\right)\right). \end{equation}    
\item[(ii)] if $2 \leq p \leq q<\infty$ and 

\begin{equation}\label{Necessary:condition:3:intro:su:3:3}
   m\leq -10\left( \frac{1}{p}-\frac{1}{q}+(1-\rho) \left(\frac{1}{2}-\frac{1}{p}\right)\right).
   \end{equation}
\end{itemize}
\end{itemize}
\end{corollary}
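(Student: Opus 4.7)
The plan is to apply Theorem \ref{main:theorem} to the group $\textnormal{SU}(3)$ with the specific sub-Laplacian $\mathcal{L}=-\sum_{j=1}^{6}X_j^2$ arising from the Gell-Mann basis. The only nontrivial ingredients are to verify the H\"ormander condition of step $\kappa=2$ for the system $X=\{X_1,\dots,X_6\}$, and to compute the Hausdorff dimension $Q$ of $\textnormal{SU}(3)$ associated with the corresponding control distance. Once $Q=10$ is established, the three sub-statements of the corollary follow directly from Theorem \ref{main:theorem} by substitution.

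First I would verify the H\"ormander condition. From T{\SMALL{ABLE}} \ref{table2}, one reads $[X_1,X_2]=-X_7$ and $[X_3,X_4]=-X_8$, where $X_7,X_8$ span the Cartan subalgebra $\mathfrak{t}_{\mathfrak{su}(3)}$ and are linearly independent from $X_1,\dots,X_6$ (which span the complement of $\mathfrak{t}_{\mathfrak{su}(3)}$ in $\mathfrak{su}(3)$ with respect to the inner product induced by the Killing form). Hence
\[
\mathrm{span}\{X_j,[X_i,X_k]: 1\leq i,j,k\leq 6\}=\mathfrak{su}(3),
\]
which gives the H\"ormander condition with step $\kappa=2$.

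Next I would compute the Hausdorff dimension via \eqref{Hausdorff-dimension}. With $\dim H^1 G=6$ (the span of $X_1,\dots,X_6$) and $\dim H^2 G=8=\dim\textnormal{SU}(3)$, the formula gives
\[
Q=\dim H^1 G+2\bigl(\dim H^2 G-\dim H^1 G\bigr)=6+2(8-6)=10.
\]
Substituting $Q=10$ into Theorem \ref{main:theorem} yields exactly \eqref{Necessary:condition:3:intro:su:3}, \eqref{Necessary:condition:4:intro:su:3} and \eqref{Necessary:condition:3:intro:su:3:3} in the three indicated regimes of $(p,q)$, which completes the argument.

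Since this corollary is a direct specialisation, there is no serious obstacle; the only point that needs care is the bookkeeping with the commutator table to confirm that $X_7,X_8$ are independent of $\mathrm{span}\{X_1,\dots,X_6\}$, so that the jump in the dimensions of the filtration is indeed $8-6=2$ rather than $1$. Any miscount here would give the wrong value of $Q$ and invalidate the order thresholds.
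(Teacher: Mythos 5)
Your proposal is correct and follows exactly the route the paper takes: verify the H\"ormander condition of step $\kappa=2$ from the commutator table (using $[X_1,X_2]=-X_7$ and $[X_3,X_4]=-X_8$ to fill out the Cartan subalgebra), compute $Q=6+2(8-6)=10$ from \eqref{Hausdorff-dimension}, and substitute into Theorem \ref{main:theorem}. The paper performs this same verification in the paragraphs preceding the corollary and treats the statement as an immediate specialisation, so there is nothing further to add.
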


\subsection*{Conflict of interests statement.}   On behalf of all authors, the corresponding author states that there is no conflict of interest.

\subsection*{Data Availability Statements.}  Data sharing not applicable to this article as no datasets were generated or
analysed during the current study.\\

\bibliographystyle{amsplain}

\begin{thebibliography}{99}
\bibitem{ARN1} R. Akylzhanov, E. Nursultanov and M. Ruzhansky, Hardy-Littlewood-Paley inequalities and Fourier multipliers on $SU(2).$ {\it Studia Math.} 234, no. 1, (2016), 1-29.
		\bibitem{AMR}  R. Akylzhanov, S. Majid and M. Ruzhansky, Smooth dense subalgebras and Fourier multipliers on compact quantum groups. {\it  Comm. Math. Phys.} 362(3), (2018), 761--799.
\bibitem{ARN} R. Akylzhanov, E. Nursultanov and M. Ruzhansky, Hardy-Littlewood, Hausdorff-Young-Paley inequalities, and $L^p-L^q$ Fourier multipliers on compact homogeneous manifolds. {\it  J. Math. Anal. Appl.} 479,  no. 2, (2019), 1519--1548.
\bibitem{AR} R. Akylzhanov and M. Ruzhansky, $L^p-L^q$ multipliers on locally compact groups, {\it J. Func. Anal.}, 278(3), (2019). DOI: https://doi.org/10.1016/j.jfa.2019.108324

\bibitem{Hounie} J. \'Alvarez and J. Hounie, Estimates for the kernel and continuity properties of pseudo-differential operators. {\it Ark. Mat.} 28, no. 1, (1990), 1–22

\bibitem{B79} R. Beals, $L^p$ and H\"older estimates for pseudodifferential operators: sufficient conditions. {\it Ann. Inst. Fourier} 29(3),  (1979), 239–260.

\bibitem{CR20} D. Cardona and M. Ruzhansky,  Subelliptic pseudo-differential operators and Fourier integral operators on compact Lie groups. to appear in  {\it MSJ Memoirs, Mathematical Society of Japan,  Tokyo,} (2023), 180 Pages. arXiv:2008.09651.


\bibitem{Nikolskii} D. Cardona and M. Ruzhansky,  Littlewood-Paley theorem, Nikolskii inequality, Besov spaces, Fourier and spectral multipliers on graded Lie groups. to appear in {\it Potential Analysis}, (2023).   arXiv:1610.04701

\bibitem{CDR21JGA} D. Cardona, J. Delgado and M. Ruzhansky,  $L^p$-bounds for pseudo-differential operators on graded Lie groups. {\it J. Geom. Anal.} 31, no. 12, (2021),  11603--11647.

\bibitem{CDRFC} D. Cardona, J. Delgado and M. Ruzhansky, Analytic functional calculus and G\r{a}rding inequality on graded Lie groups with applications to diffusion equations. (2023). arXiv:2111.07469


\bibitem{LpLqGraded} D. Cardona, V.  Kumar, M. Ruzhansky, $L^p$-$L^q$-Boundedness of pseudo-differential operators on graded Lie groups,  arXiv:2307.16094.

\bibitem{CKNR} D. Cardona, V.  Kumar, M. Ruzhansky and N. Tokmagambetov, $L^p$–$L^q$ boundedness of pseudo-differential operators on smooth manifolds and its applications to nonlinear equations. {\it Adv. Oper. Theory},  8(50), (2023).
	
		

	\bibitem{CK1}	M. Chatzakou and V. Kumar, $L^p-L^q$ boundedness of spectral multipliers of the anharmonic oscillator. {\it C. R. Math. Acad. Sci. Paris} 360, (2022), 343-347.  

\bibitem{Saloff-CosteBook} T. Coulhon, L. Saloff-Coste, N. Varopoulos,  Analysis and Geometry on Groups.
Cambridge Tracts in Mathematics, 100. Cambridge University Press, 1993. 
 
\bibitem{Delgado2006} J. Delgado Estimations Lp pour une classe d'opérateurs pseudo-differentiels dans le cadre du calcul de Weyl-H\"ormander. {\it Journal d'Analyse Math\'ematique}. Vol. 100, p. 337-374, 2006.

\bibitem{DR19} J. Delgado and M. Ruzhansky, $L^p$-bounds for pseudo-differential operators on compact
Lie groups, {\it J. Inst. Math. Jussieu} 18(3),  (2019), 531–559.

\bibitem{FRR23} S. Federico, D. Rottensteiner and M. Ruzhansky,  Weyl calculus on graded groups. (2023). ArXiv: 2306.04275.

\bibitem{FR21} V. Fischer and M. Ruzhansky,  Fourier multipliers on graded Lie groups. {\it  Colloq. Math.}
165, (2021), 1–30.

\bibitem{FischerRuzhanskyBook} V. Fischer and M. Ruzhansky, Quantization on nilpotent Lie groups. Progress in Mathematics, Vol. 314, Birkhauser, (2016). xiii+557pp.

\bibitem{FRsobo} V. Fischer and M. Ruzhansky, Sobolev spaces on graded Lie groups. {\it Ann. Inst. Fourier (Gronoble)} 67(4), (2017), 1671-1723.

\bibitem{F73} C. Fefferman,  $L^p$-bounds for pseudo-differential operators. {\it Isr. J. Math.} 14, (1973), 413–417.

\bibitem{F75}
G.~B. Folland.
\newblock Subelliptic estimates and function spaces on nilpotent {L}ie groups.
\newblock {\it Arkiv f\"{o}r Matematik} 13(1-2), (1975), 161--207.

 \bibitem{F77} G. B. Folland. \newblock On the {R}othschild–{S}tein lifting theorem. {\it Comm. Partial Differential Equations} 2(2), (1977), 165--191. 

\bibitem{FS82} G. Folland and E. Stein, Hardy Spaces on Homogeneous Groups. Princeton University
Press, Princeton, NJ, (1982).

\bibitem{Hall} B. Hall, Lie Groups, Lie Algebras, and Representations: An Elementary Introduction, Graduate Texts in Mathematics, vol. 222 (2nd ed.), Springer, ISBN 978--3319134666, 2015.


\bibitem{HN} B. Helffer and J, Nourrigat, Caracterisation des operateurs hypoelliptiques homogenes
invariants a gauche sur un groupe de Lie nilpotent gradue. {\it Comm. Partial Differential Equations} 4(8), (1979), 899–958.


\bibitem{Howe80} R. Howe,  On the role of the Heisenberg group in harmonic analysis, {\it Bull. Amer.
Math. Soc.} 3(2), (1980), 821–843.

\bibitem{Hormander1967} L. H\"ormander,   Hypoelliptic second order differential equations. {\it Acta Math.}   119, (1967),
147--171.

\bibitem{HormanderBook34} L. H\"ormander,  The Analysis of the linear partial differential operators. Vol. III. IV, Springer-Verlag, (1985).

\bibitem{Hor67} L. H\"ormander,  Pseudo-differential operators and hypo-elliptic equations. In: Proceedings Symposium on Singular Integrals, American Mathematical Society, vol. 10, (1967),
pp. 138–183. 

	\bibitem{Hormander1960} L. H\"ormander, Estimates for translation invariant operators in $L^p$ spaces. {\it Acta Math.}, 104,  (1960), 93--140.
	
	\bibitem{KN65} J. J. Kohn and L. Nirenberg, An algebra of pseudo-differential operators. {\it Commun.
Pure Appl. Math.} 18, (1965), 269–305.
	
		\bibitem{KR} V. Kumar and M. Ruzhansky, Hardy-Littlewood inequality and $L^p$–$L^q$ Fourier multipliers on compact hypergroups.  {\it J. Lie theory} 32(2), (2022), 475-498.
		
		\bibitem{KRIMRN} V. Kumar and M. Ruzhansky, $L^p$-$L^q$ boundedness of $(k, a)$-Fourier multipliers with applications to nonlinear equations. {\it Int. Math. Res. Not. (IMRN)}, 2, (2023), 1073-1093. DOI 10.1093/imrn/rnab256


  \bibitem{MR17} M. Măntoiu and M.  Ruzhansky,  Pseudo-differential operators, Wigner transform and Weyl systems on type I locally compact groups. {\it Doc. Math.} 22, (2017), 1539–1592.
		
		\bibitem{RR22} D. Rottensteiner and M. Ruzhansky,  Harmonic and anharmonic oscillators on the
Heisenberg group. {\it J. Math. Phys.} 63(11), Paper No. 111509, (2022), 23 pp.

\bibitem{Rock78} C. Rockland, Hypoellipticity on the Heisenberg group-representation-theoretic criteria. {\it Trans. Amer. Math. Soc.}, 240, (1978), 1–52.

\bibitem{RS76} L. P. Rothschild and E. M.  Stein.  Hypoelliptic differential operators and nilpotent groups. {\it Acta Math.} 137(3-4), (1976) 247--320.

 \bibitem{Roth83} L. P. Rothschild. \newblock A remark on hypoellipticity of homogeneous invariant differential operators on nilpotent {L}ie groups. {\it Comm. Partial Differential Equations} 8(15), (1983), 1679--1682.


\bibitem{RR23} D. Rottensteiner and M. Ruzhansky,  An update on the $L^p$-$L^q$ norms of spectral multipliers on unimodular Lie groups. {\it Arch. Math. (Basel)} 120, no. 5, (2023),  507--520.

\bibitem{Ruz} M. Ruzhansky and V.  Turunen,   Pseudo-differential Operators and Symmetries: Background Analysis and Advanced Topics. Birkh\"auser-Verlag, Basel, (2010).



\bibitem{RuzhanskyTurunen2011} M. Ruzhansky and V. Turunen,   Sharp G\r{a}rding inequality on compact Lie groups. {\it J. Funct. Anal.} 260, (2011), 2881--2901.

\bibitem{RuzhanskyTurunenWirth2014} M. Ruzhansky, V. Turunen and J. Wirth,  H\"ormander class of pseudo-differential operators on compact Lie groups and global hypoellipticity. {\it J. Fourier Anal. Appl.} 20,  (2014), 476--499.

\bibitem{RuzhanskyWirth2014} M. Ruzhansky and J. Wirth,   Global functional calculus for on compact Lie groups. {\it J. Funct. Anal.} 267, no. 1, (2014),  144--172. 

\bibitem{RuzhanskyWirth2015} M. Ruzhansky and J. Wirth,  $L^p$ Fourier multipliers on compact Lie groups. {\it Math. Z.}  280, (2015), 621--642.

\bibitem{Stein71} E. M. Stein, Some problems in harmonic analysis suggested by symmetric spaces and semi-simple groups. Actes
du Congr\'es International des Math\'ematiciens (Nice, 1970), Tome 1, pp. 173--189. Gauthier-Villars, Paris, (1971).



\bibitem{Tay1} M. Taylor, Noncommutative microlocal analysis. I. {\it Mem. Amer. Math. Soc.}, 53(313), (1984), iv+182. 

\bibitem{Tay} M. Taylor,  Pseudodifferential Operators. Princeton University Press, Princeton, NJ
(1981).

\bibitem{terRob97} A. F. M. ter Elst and D. W. Robinson,  Spectral estimates for positive Rockland operators. In: Algebraic Groups and Lie Groups. {\it Australian Mathematical Society
Lecture Series}, vol. 9,  Cambridge University Press, Cambridge, (1997), pp. 195–213.





\end{thebibliography}

\end{document}